\newtheorem{thm}{Theorem}[section]
\newtheorem{prop}[thm]{Proposition}
\newtheorem{rem}[thm]{Remark}
\newtheorem{exm}[thm]{Example}
\newtheorem{algo}[thm]{Algorithm}
\numberwithin{equation}{section}
\newcommand{\abs}[1]{\left\vert#1\right\vert}
\newcommand{\mean}[1]{\mathbb{E}\lbrack #1\rbrack}
\newcommand{\figcaption}{\def\@captype{figure}\caption}
\newcommand{\tabcaption}{\def\@captype{table}\caption}
\def\@captype{table}
\def\@captype{table}
\def\cprime{$'$}
\begin{document}

\title{A Recursive sparse grid collocation method for differential
equations with white noise \footnotemark[1]}
\author{Z. Zhang\footnotemark[2]\ \footnotemark[3], M.V. Tretyakov\footnotemark[4], B.
Rozovskii\footnotemark[2]\ \footnotemark[5], \ and G.E. Karniadakis%
\footnotemark[2]\ \footnotemark[6]}
\maketitle

\begin{abstract}
We consider  
a sparse grid collocation method in conjunction with a time discretization of the
differential equations for computing expectations of functionals of solutions to differential
equations perturbed by time-dependent white noise. We first analyze the error of Smolyak's sparse grid
collocation used to evaluate expectations of functionals of solutions to
stochastic differential equations discretized by the Euler scheme. We show
theoretically and numerically that this algorithm can have satisfactory
accuracy for small magnitude of noise or  small integration time, however it
does not converge neither with decrease of the Euler scheme's time step size
nor with increase of Smolyak's sparse grid level. Subsequently,  we use this method as
a building block for proposing a new algorithm by combining sparse grid
collocation with a recursive procedure. This approach allows us
to numerically integrate linear stochastic partial differential equations
over longer times, which is illustrated in numerical tests on a stochastic
advection-diffusion equation.
\end{abstract}

\date{\today }

\renewcommand{\thefootnote}{\arabic{footnote}} \renewcommand{\thefootnote}{%
\fnsymbol{footnote}}

\footnotetext[1]{To cite this paper, use \\ Z. Zhang, M. V. Tretyakov, B. Rozovskii, and G. E. Karniadakis. A recursive sparse grid collocation method for differential equations with white noise. SIAM J. Sci. Comput., 36(4):A1652-A1677, 2014}
\footnotetext[2]{%
Division of Applied Mathematics, Brown University, Providence RI, 02912, USA}
 \footnotetext[3]{%
Email: zhongqiang\_zhang@brown.edu.}
\footnotetext[4]{%
School of Mathematical Sciences, University of Nottingham, Nottingham, NG7
2RD, UK. Email: Michael.Tretyakov@nottingham.ac.uk}
 \footnotetext[5]{%
Email: boris\_rozovsky@brown.edu.}
 \footnotetext[6]{%
Email: george\_karniadakis@brown.edu}

\begin{keywords}
Smolyak's sparse grid, stochastic collocation,
long time integration, stochastic partial
differential equations
\end{keywords}
\begin{AMS}
Primary 60H15; Secondary 35R60, 60H40
\end{AMS} 


\section{Introduction}

In a number of applications from physics, financial engineering, biology and
chemistry it is of interest to compute expectations of some functionals of
solutions of ordinary stochastic differential equations (SDE) and stochastic
partial differential equations (SPDE) driven by white noise. Usually,
evaluation of such expectations requires to approximate solutions of
stochastic equations and then to compute the corresponding averages with
respect to the approximate trajectories. We will not consider the former in
this paper (see, e.g. \cite{MilTre-B04} and references therein) and will
concentrate on the latter. The most commonly used approach for computing the
averages is the Monte Carlo technique, which is known for its slow rate of
convergence and hence  limiting
computational efficiency of stochastic simulations. To speed up computation
of the averages, variance reduction techniques (see, e.g. \cite%
{MilTre-B04,MilTre09} and the references therein), quasi-Monte Carlo
algorithms \cite{Nie-B92,SloJoe-B94} {\color{black}and multi-level quasi-Monte Carlo methods \cite{KuoSS12a}}, and the multi-level Monte Carlo method 
\cite{Gil08,Gil13} have been proposed and used.

An alternative approach to computing the averages is (stochastic)
collocation methods in random space, which are deterministic methods in
comparison with the Monte Carlo-type methods that are based on a statistical
estimator of a mean. The expectation can
be viewed as an integral with respect to the measure corresponding to
approximate trajectories. In stochastic collocation methods, one uses
(deterministic) high-dimensional quadratures to evaluate these integrals. In
the context of uncertainty quantification where moments of stochastic
solutions are sought, collocation methods and their close counterparts
(e.g., Wiener chaos expansion-based methods) have been very effective in
reducing the overall computational cost in engineering problems, see e.g. 
\cite{GhaSpa-B91,TatMcR94,XiuKar02a}.

Stochastic equations or differential equations with randomness can be split
into differential equations perturbed by time-independent noise and by
time-dependent noise. It has been demonstrated in a number of works (see
e.g. \cite{BabTZ04,BieSch09,BabNT07,XiuHes05,MotNT12,NobTem09,ZhangGun12}
and references therein) that stochastic collocation methods can be a
competitive alternative to the Monte Carlo technique and its variants in the
case of differential equations perturbed by time-independent noise. The
success of these methods relies on smoothness in the random space and can
usually be achieved when it is sufficient to consider only a limited number
of random variables (i.e., in the case of a low dimensional random space).
The small number of random variables significantly limits the applicability
of stochastic collocation methods to differential equations perturbed by
time-dependent noise as, in particular, it will be demonstrated in this
paper.

The class of stochastic collocation methods for SDE with time-dependent
white noise includes cubatures on Wiener space \cite{LyoVic04},
derandomization \cite{MulRY12}, optimal quantization \cite{PagPha05,PagPri03}
and sparse grids of Smolyak type \cite{Ger-Phd07,GerGri98,GriHol10}. While
derandomization and optimal quantization aim at finding quadrature rules
which are in some sense optimal for computing a particular expectation under
consideration, cubatures on Wiener space and a stochastic collocation method
using Smolyak sparse grid quadratures (a sparse grid collocation method,
SGC) use pre-determined quadrature rules in a universal way without being
tailed towards a specific expectation {\color{black}unless some adaptive strategies are applied}. Since SGC is endowed with negative
weights, it is, in practice, different from cubatures on Wiener space, where
only quadrature rules with positive weights are used. Among quadrature
rules, SGC is of particular interest due to its computational convenience.
It has been considered in computational finance \cite{Ger-Phd07,GriHol10},
where  high accuracy was observed. We note that the use of SGC in \cite%
{Ger-Phd07,GriHol10} relies on exact sampling of geometric Brownian motion
and of solutions of other simple SDE models, i.e., SGC in these works was
not studied in conjunction with SDE approximations.

In this paper, we consider a SGC method accompanied by time discretization
of differential equations perturbed by time-dependent noise. Our objective
is twofold. {\em First}, using both analytical and numerical results, we warn that
straightforward carrying over stochastic collocation methods and, in
particular, SGC to the case of differential equations perturbed by
time-dependent noise (SDE or SPDE) usually leads to a failure. The main
reason for this failure is that when integration time increases and/or time
discretization step decreases, the number of random variables in
approximation of SDE and SPDE grows quickly. The number of collocation
points required for sufficient accuracy\ of collocation methods grows
exponentially with the number of random variables. This results in
{\color{black}failure} of algorithms based on SGC and SDE time discretizations.
Further, due to empirical evidence (see e.g. \cite{Pet03}), the use of SGC
is limited to problems with random space dimensionality of up to $40.$
Consequently, SGC algorithms for differential equations perturbed by
time-dependent noise can be used only over small time intervals unless a
cure for its fundamental limitation is found.

In Section~2 (after brief introduction to the sparse grid of Smolyak \cite%
{Smolyak63} (see also \cite{WasWoz95,GerGri98,XiuHes05}) and to the
weak-sense numerical integration for SDE (see, e.g. \cite{MilTre-B04})), we
obtain an error estimate for a SGC method accompanied by the Euler scheme
for evaluating expectations of smooth functionals of solutions of a scalar
linear SDE with additive noise. In particular, we conclude that the SGC can
successfully work for a small magnitude of noise and relatively short
integration time while {\color{black} in general} it does not converge neither with decrease of the
time discretization step used for SDE approximation nor with increase of the
level of Smolyak's sparse grid{\color{black}; see Remark \ref{rem:convergence-sgc}}. Numerical tests in Section~\ref%
{sec:num-experiments-sg} confirm our theoretical conclusions and we also
observe first-order convergence in time step size of the algorithm
using the SGC method as long as the SGC error is small relative to the error
of time discretization of SDE. We note that our conclusion is, to some
extent, similar to that for cubatures on Wiener space \cite{CasLit11}, for
Wiener chaos method \cite{HouLRZ06,LotMR97,LotRoz06,ZhangRTK12} and some
other functional expansion approaches \cite{BudKal96,BudKal97}.

The {\em second objective} of the paper is to suggest a possible cure for the
aforementioned deficiencies, which prevent SGC to be used over longer time
intervals. For longer time simulation, deterministic replacements (such as
stochastic collocation methods and functional expansion methods) of the
Monte Carlo technique in simulation of differential equations perturbed by
time-dependent noise do not work effectively unless some restarting
strategies allowing to `forget' random variables from earlier time steps are
employed. Examples of such strategies are the recursive approach for Wiener
chaos expansion methods to compute moments of solutions to{\ linear} SPDE 
\cite{LotMR97,ZhangRTK12} and an approach for cubatures on Wiener space
based on compressing the history data via a regression at each time step 
\cite{LitLyo12}.

Here we exploit the idea of the recursive approach to achieve accurate
longer time integration by numerical algorithms using the SGC. For linear
SPDE with {\em time-independent} coefficients, the recursive approach works as
follows. We first find an approximate solution of an SPDE at a relatively
small time $t=h$, and subsequently take the approximation at $t=h$ as the
initial value in order to compute the approximate solution at $t=2h$, and so
on, until we reach the final integration time $T=Nh$. To find second moments
of the SPDE solution, we store a covariance matrix of the approximate
solution at each time step $kh$ and recursively compute the first two
moments. Such an algorithm is proposed in Section~\ref%
{sec:recursive-sg-advdiff};   in Section~\ref{sec:num-experiments-sg} we
demonstrate numerically that this algorithm converges in time step $h$ and
that it can work well on longer time intervals. At the same time, a major
challenge remains: how to effectively use restarting strategies for SGC in
the case of nonlinear SDE and SPDE and further work is needed in this
direction.

\section{Sparse grid for weak integration of SDE}

\subsection{Smolyak's sparse grid\label{sec:ssg}}

Sparse grid quadrature is a certain reduction of product quadrature rules
which decreases the number of quadrature nodes and allows effective
integration in moderately high dimensions \cite{Smolyak63} (see also \cite%
{WasWoz95,NovRit99,GerGri98}). Here we introduce it in the form suitable for
our purposes.

We will be interested in evaluating $d$-dimensional integrals of a function $%
\varphi (y),$ $y\in \mathbb{R}^{d},$ with respect to a Gaussian measure: 
\begin{equation}
I_{d}\varphi :=\frac{1}{\left( 2\pi \right) ^{d/2}}\int_{\mathbb{R}%
^{d}}\varphi (y)\exp \left( -\frac{1}{2}\sum_{i=1}^{d}y_{i}^{2}\right)
\,dy_{1}\cdots dy_{d}.  \label{Defi}
\end{equation}%
Consider a sequence of one-dimensional Gauss--Hermite quadrature rules $%
Q_{n} $ with number of nodes $n\in \mathbb{N}$ for univariate functions $%
\psi (\mathsf{y}),$ $\mathsf{y\in }\mathbb{R}$: 
\begin{equation}
Q_{n}\psi (\mathsf{y})=\sum_{\alpha =1}^{n}\psi (\mathsf{y}_{n,\alpha })%
\mathsf{w}_{n,\alpha },  \label{eq:1d-Gauss-Hermite-rule}
\end{equation}%
where $\mathsf{y}_{n,1}<\mathsf{y}_{n,2}<\cdots <\mathsf{y}_{n,n}$ are the
roots of the Hermite polynomial 
$H_{n}(\mathsf{y})=(-1)^{n}e^{\mathsf{y}^{2}/2}\frac{d^{n}}{d\mathsf{y}^{n}}%
e^{-\mathsf{y}^{2}/2}$ 
and $\mathsf{w}_{n,\alpha }=n!/(n^{2}[H_{n-1}(\mathsf{y}_{n,\alpha })]^{2})$
are the associated weights. It is known that $Q_{n}\psi $ is exactly equal
to the integral $I_{1}\psi $ when $\psi $ is a polynomial of degree less
than or equal to $2n-1,$ i.e., the polynomial degree of exactness of
Gauss--Hermite quadrature rules $Q_{n}$ is equal to $2n-1.$

We can approximate the multidimensional integral $I_{d}\varphi $ by a
quadrature expressed as the tensor product rule 
\begin{eqnarray}
I_{d}\varphi &\approx &\bar{I}_{d}\varphi :=Q_{n}\otimes Q_{n}\cdots \otimes
Q_{n}\varphi (y_{1},y_{2},\cdots ,y_{d})=Q_{n}^{\otimes d}\varphi
(y_{1},y_{2},\cdots ,y_{d})  \label{appi} \\
&=&\sum_{\alpha _{1}=1}^{n}\cdots \sum_{\alpha _{d}=1}^{n}\varphi (\mathsf{y}%
_{n,\alpha _{1}},\ldots ,\mathsf{y}_{n,\alpha _{d}})\mathsf{w}_{n,\alpha
_{1}}\cdots \mathsf{w}_{n,\alpha _{d}},  \notag
\end{eqnarray}%
where for simplicity we use the same amount on nodes in all the directions.
The quadrature $\bar{I}_{d}\varphi $ is exact for all polynomials from the
space $\mathcal{P}_{k_{1}}\otimes \cdots \otimes \mathcal{P}_{k_{d}}$ with $%
\max k_{i}=2n-1,$ where $\mathcal{P}_{k}$ is the space of one-dimensional
polynomials of degree less than or equal to $k$ (we note in passing that
this fact is easy to prove using probabilistic representations of $%
I_{d}\varphi $ and $\bar{I}_{d}\varphi ).$ Computational costs of quadrature
rules are measured in terms of a number of function evaluations which is
equal to $n^{d}$ in the case of the tensor product (\ref{appi}), i.e., the
computational cost of (\ref{appi}) grows exponentially fast with dimension.

The sparse grid of Smolyak \cite{Smolyak63} reduces computational complexity
of the tensor product rule (\ref{appi}) via exploiting the difference
quadrature formulas: 
\begin{equation*}
A(L,d)\varphi :=\sum_{d\leq \left\vert \mathbf{i}\right\vert \leq
L+d-1}(Q_{i_{1}}-Q_{i_{1}-1})\otimes \cdots \otimes
(Q_{i_{d}}-Q_{i_{d}-1})\varphi ,
\end{equation*}%
where $Q_{0}=0$ and $\mathbf{i}=(i_{1},i_{2},\ldots ,i_{d})$ is a
multi-index with $i_{k}\geq 1$ and $\left\vert \mathbf{i}\right\vert
=i_{1}+i_{2}+\cdots +i_{d}$. The number $L$ is usually referred to as{\ the
level of the sparse grid}. The sparse grid rule (\ref{eq:smolyak-tensor-like}%
) can also be written in the following form \cite{WasWoz95}: 
\begin{equation}
A(L,d)\varphi =\sum_{L\leq \left\vert \mathbf{i}\right\vert \leq
L+d-1}(-1)^{L+d-1-\left\vert \mathbf{i}\right\vert }\binom{d-1}{\left\vert 
\mathbf{i}\right\vert -L}Q_{i_{1}}\otimes \cdots \otimes Q_{i_{d}}\varphi .
\label{eq:smolyak-tensor-like}
\end{equation}%
The quadrature $A(L,d)\varphi $ is exact for polynomials from the space $%
\mathcal{P}_{k_{1}}\otimes \cdots \otimes \mathcal{P}_{k_{d}}$ with $|%
\mathbf{k}|=2L-1,$ i.e., for polynomials of total degree up to $2L-1$ \cite[%
Corollary 1]{NovRit99}. Due to (\ref{eq:smolyak-tensor-like}), the total
number of nodes used by this sparse grid rule is estimated by 
\begin{equation*}
\#S\leq \sum_{L\leq \left\vert \mathbf{i}\right\vert \leq L+d-1}i_{1}\times
\cdots \times i_{d}.
\end{equation*}%
Table \ref{tbl:no-sgc-level5} lists the number of sparse grid points, $\#S$,
up to level 5 when the level is not greater than $d$.

\begin{table}[tbph]
\caption{The number of sparse grid points for the sparse grid quadrature 
\eqref{eq:smolyak-tensor-like} using the one-dimensional Gauss-Hermite
quadrature rule \eqref{eq:1d-Gauss-Hermite-rule}, when the sparse grid level 
$L$ $\leq d$.}
\label{tbl:no-sgc-level5}
\begin{center}
\scalebox{0.85}{
\begin{tabular}{c|ccccccc}
\hline
& $L=1$ & $L=2$ & $L=3$ & $L=4$ & $L=5$ &  &  \\ \hline
$\#S$ & $1$ & $2d+1$ & $2d^2+2d+1$ & $\frac{4}{3}d^3+2d^2+\frac{14}{3}d+1$ & 
$\frac{2}{3}d^4+\frac{4}{3}d^3 +\frac{22}{3}d^2+\frac{8}{3}d+1$ &  &  \\ 
\hline
\end{tabular}}
\end{center}
\end{table}

The quadrature $\bar{I}_{d}\varphi $ from (\ref{appi}) is exact for
polynomials of total degree $2L-1$ when $n=L.$ It is not difficult to see
that if the required polynomial exactness (in terms of total degree of
polynomials) is relatively small then the sparse grid rule (\ref%
{eq:smolyak-tensor-like}) substantially reduces the number of function
evaluations compared with the tensor-product rule (\ref{appi}). For
instance, suppose that the dimension $d=40$ and the required polynomial
exactness is equal to $3.$ Then the cost of the tensor product rule (\ref%
{appi}) is $3^{40} \doteq 1.\,215\,8\times 10^{19}$ while the cost of the
sparse grid rule (\ref{eq:smolyak-tensor-like}) based on one-dimensional
rule \eqref{eq:1d-Gauss-Hermite-rule} is $3281.$

\begin{rem}
In this paper we consider the isotropic SGC. More efficient algorithms might
be built using anisotropic SGC methods \cite{GriHol10,NobTW08}, which employ
more quadrature points along the \textquotedblleft most
important\textquotedblright\ direction. Goal-oriented quadrature rules, e.g. 
\cite{MulRY12,PagPha05,PagPri03}, can also be exploited instead of
pre-determined quadrature rules used here.   {\color{black} 
However, the effectiveness of   adaptive sparse grids relies heavily on the order of  importance in random dimension of    numerical solutions to stochastic differential equations, which is not always easy to reach. Furthermore, all these sparse grids as  integration methods in random space  grow quickly with random dimensions and thus cannot be used for longer time integration (usually with large random dimensions). Hence, we  consider only isotropic SGC.}
\end{rem}

\subsection{Weak-sense integration of SDE}

Let $(\Omega ,\mathcal{F},P)$ be a probability space and $(w(t),\mathcal{F}%
_{t}^{w})=((w_{1}(t),\ldots ,w_{r}(t))^{\intercal },\mathcal{F}_{t}^{w})$ be
an $r$-dimensional standard Wiener process, where $\mathcal{F}_{t}^{w},\
0\leq t\leq T,$ is an increasing family of $\sigma $-subalgebras of $%
\mathcal{F}$ induced by $w(t).$

Consider the system of Ito SDE 
\begin{equation}
dX=a(t,X)dt+\sum_{l=1}^{r}\sigma _{l}(t,X)dw_{l}(t),\ \ t\in (t_{0},T],\
X(t_{0})=x_{0},  \label{eq:ito-sde-vector}
\end{equation}%
where $X,$\ $a,$\ $\sigma _{r}$ are $m$-dimensional column-vectors and $%
x_{0} $ is independent of $w$. We assume that $a(t,x)$ and $\sigma (t,x)$
are sufficiently smooth and globally Lipschitz. We are interested in
computing the expectation 
\begin{equation}
u(x_{0})=\mathbb{E}f(X_{t_{0},x_{0}}(T)),  \label{eq:weak-apprx-def}
\end{equation}%
where $f(x)$ is a sufficiently smooth function with growth at infinity not
faster than a polynomial: 
\begin{equation}
\left\vert f(x)\right\vert \leq K(1+|x|^{\varkappa })  \label{f_cond}
\end{equation}%
for some $K>0$ and $\varkappa \geq 1.$

To find $u(x_{0}),$ we first discretize the solution of (\ref%
{eq:ito-sde-vector}). Let 
\begin{equation*}
h=(T-t_{0})/N,\ \ t_{k}=t_{0}+kh,\ \ k=0,\ldots ,N.
\end{equation*}%
In application to \eqref{eq:ito-sde-vector} the Euler scheme has the form 
\begin{equation}
X_{k+1}=X_{k}+a(t_{k},X_{k})h+\sum_{l=1}^{r}\sigma _{l}(t_{k},X_{k})\Delta
_{k}w_{l},  \label{eq:ito-sde-vector-euler}
\end{equation}%
where $X_{0}=x_{0}$ and $\Delta _{k}w_{l}=w_{l}(t_{k+1})-w_{l}(t_{k})$. The
Euler scheme can be realized in practice by replacing the increments $\Delta
_{k}w_{l}$ with Gaussian random variables: 
\begin{equation}
X_{k+1}=X_{k}+a(t_{k},X_{k})h+\sum_{l=1}^{r}\sigma _{l}(t_{k},X_{k})\sqrt{h}%
\xi _{l,k+1},  \label{eq:ito-sde-vec-strong-euler}
\end{equation}%
where $\xi _{r,k+1}$ are i.i.d. $\mathcal{N}(0,1)$ random variables. Due to
our assumptions, the following error estimate holds for %
\eqref{eq:ito-sde-vec-strong-euler} (see e.g. \cite[Chapter 2]{MilTre-B04}): 
\begin{equation}
|\mathbb{E}f(X_{N})-\mathbb{E}f(X(T))|\leq Kh,
\label{eq:ito-sde-vec-strongEuler-error}
\end{equation}%
where $K>0$ is a constant independent of $h$. This first-order weak
convergence can also be achieved by replacing $\xi _{l,k+1}$ with discrete
random variables \cite{MilTre-B04}, e.g., the weak Euler scheme has the form 
\begin{equation}
\tilde{X}_{k+1}=\tilde{X}_{k}+ha(t_{k},\tilde{X}_{k})+\sqrt{h}%
\sum_{l=1}^{r}\sigma _{l}(t_{k},\tilde{X}_{k})\zeta _{l,k+1},\ \ \ \
k=0,\ldots ,N-1,  \label{eq:ito-sde-vec-weak-euler}
\end{equation}%
where $\tilde{X}_{0}=x_{0}$ and $\zeta _{l,k+1}$ are i.i.d. random variables
with the law 
\begin{equation}
P(\zeta =\pm 1)=1/2.  \label{eq:weak-euler-discrete-apprx-gauss}
\end{equation}%
The following error estimate holds for \eqref{eq:ito-sde-vec-weak-euler}-%
\eqref{eq:weak-euler-discrete-apprx-gauss} (see e.g. \cite[Chapter 2]%
{MilTre-B04}):%
\begin{equation}
|\mathbb{E}f(\tilde{X}_{N})-\mathbb{E}f(X(T))|\leq Kh\ ,
\label{eq:ito-sde-vec-weakEuler-error}
\end{equation}%
where $K>0$ can be a different constant than in (\ref%
{eq:ito-sde-vec-strongEuler-error}).

Introducing the function $\varphi (y),$ $y\in $ $\mathbb{R}^{rN},$ so that 
\begin{equation}
\varphi (\xi _{1,1},\ldots ,\xi _{r,1},\ldots ,\xi _{1,N},\ldots ,\xi
_{r,N})=f(X_{N}),  \label{eq:1d-sde-functional-weak}
\end{equation}%
we have 
\begin{eqnarray}
u(x_{0}) &\approx &\bar{u}(x_{0}):=\mathbb{E}f(X_{N})=\mathbb{E}\varphi (\xi
_{1,1},\ldots ,\xi _{r,1},\ldots ,\xi _{1,N},\ldots ,\xi _{r,N})
\label{eq:strong-euler-mean-def} \\
&=&\frac{1}{(2\pi )^{rN/2}}\int_{\mathbb{R}^{rN}}\varphi (y_{1,1},\ldots
,y_{r,1},\ldots ,y_{1,N},\ldots ,y_{r,N})\exp \left( {-\frac{1}{2}%
\sum_{i=1}^{rN}y_{i}^{2}}\right) \,dy.  \notag
\end{eqnarray}%
Further, it is not difficult to see from \eqref{eq:ito-sde-vec-weak-euler}-%
\eqref{eq:weak-euler-discrete-apprx-gauss} and (\ref{appi}) that 
\begin{eqnarray}
u(x_{0}) &\approx &\tilde{u}(x_{0}):=\mathbb{E}f(\tilde{X}_{N})=\mathbb{E}%
\varphi (\zeta _{1,1},\ldots ,\zeta _{r,1},\ldots ,\zeta _{1,N},\ldots
,\zeta _{r,N})  \label{eq:ito-sde-weak-euler-functional-mean-tensor} \\
&=&Q_{2}^{\otimes rN}\varphi (y_{1,1},\ldots ,y_{r,1},\ldots ,y_{1,N},\ldots
,y_{r,N}),  \notag
\end{eqnarray}%
where $Q_{2}$ is the Gauss-Hermite quadrature rule {\color{black}defined in  \eqref{eq:1d-Gauss-Hermite-rule} with $n=2$
i.e., $\mathsf{y}_{1}=1$, $\mathsf{y}_2=-1$ with weights $\mathsf{w}_1=\mathsf{w}_2=1/2$).}
{\color{black}
Comparing  \eqref{eq:strong-euler-mean-def}  and   \eqref{eq:ito-sde-weak-euler-functional-mean-tensor}, we can say  that $\tilde{u}(x_{0})$  is a tensor-product quadrature rule for the multidimensional integral $\bar{u}(x_{0})$.  In other words, the weak Euler scheme  \eqref{eq:ito-sde-vec-weak-euler}-\eqref{eq:weak-euler-discrete-apprx-gauss}  
can be interpreted as  the strong Euler scheme with tensor-product integration in random space.We note that the approximation, $\tilde{u}(x_0)$, of  $\bar{u}(x_{0})$ satisfies (cf.  \eqref{eq:ito-sde-vec-strongEuler-error}  and  \eqref{eq:ito-sde-vec-weakEuler-error}) \begin{equation}\label{eq:weak-strong-order-one}
\bar{u}(x_{0})-\tilde{u}(x_{0})=O(h).
\end{equation} 
}

\begin{rem}
Let $\zeta _{l,k+1}$ in \eqref{eq:ito-sde-vec-weak-euler} be i.i.d. random
variables with the law 
\begin{equation}
P(\zeta =\mathsf{y}_{n,j})=\mathsf{w}_{n,j},\quad j=1,\ldots ,n,
\label{eq:weak-euler-discrete-apprx-gauss-n}
\end{equation}%
where $\mathsf{y}_{n,j}$ are nodes of the Gauss-Hermite quadrature $Q_{n}$
and $\mathsf{w}_{n,j}$ are the corresponding quadrature weights (see $(\ref%
{eq:1d-Gauss-Hermite-rule})$). Then 
\begin{equation*}
\mathbb{E}f(\tilde{X}_{N})=\mathbb{E}\varphi (\zeta _{1,N},\ldots ,{}\zeta
_{r,N})=Q_{n}^{\otimes rN}\varphi (y_{1,1},\ldots ,y_{r,N}),
\end{equation*}%
which can be a more accurate approximation of $\bar{u}(x_{0})$ than $\tilde{u%
}(x_{0})$ from $(\ref{eq:ito-sde-weak-euler-functional-mean-tensor})$ but
the weak-sense error for the SDE approximation $\mathbb{E}f(\tilde{X}_{N})-%
\mathbb{E}f(X(T))$ remains of order $O(h).$
\end{rem}

Practical implementation of $\bar{u}(x_{0})$ and $\tilde{u}(x_{0})$ usually
requires the use of the Monte Carlo technique since the computational cost
of, e.g. the tensor product rule in (\ref%
{eq:ito-sde-weak-euler-functional-mean-tensor}) is prohibitively high (cf.
Section~\ref{sec:ssg}). In this paper, we consider application of the sparse
grid rule (\ref{eq:smolyak-tensor-like}) to the integral in (\ref%
{eq:strong-euler-mean-def}) motivated by lower computational cost of (\ref%
{eq:smolyak-tensor-like}).

{\color{black}In this approach, the total error has two parts
\begin{equation*}
\abs{\mathbb{E}f(X(T))- A(L,N)\varphi }\leq \abs{ \mathbb{E}f(X(T))- \mathbb{E}f(\tilde{X}_{N}) }+ \abs{\mathbb{E}f(\tilde{X}_{N}) - A(L,N)\varphi},
\end{equation*}%
where $A(L,N)$ is defined in  \eqref{eq:smolyak-tensor-like} and $\varphi$ is  from \eqref{eq:1d-sde-functional-weak}. The first part is controlled by  the  time step size $h$, see \eqref{eq:ito-sde-vec-strongEuler-error}, and it converges to zero with order one in $h$.  The second part is controlled by  the 
sparse grid level $L$ but it depends on $h$ since decrease of $h$ increases the dimension of the random space.  Some illustrative examples will be presented in Section  \ref{sec:sg-errest-illus-b}.}

\subsubsection{Probabilistic interpretation of SGC}
It is not difficult to show that SGC admits a probabilistic interpretation,
e.g. in the case of level $L=2$ we have%
\begin{eqnarray}
&&A(2,N)\varphi (y_{1,1},\ldots ,y_{r,1},\ldots ,y_{1,N},\ldots ,y_{r,N})
\label{eq:smolyak-level2-gauss-hermite-prob} \\
&=&\left( Q_{2}\otimes Q_{1}\otimes \cdots \otimes Q_{1}\right) \varphi
+\left( Q_{1}\otimes Q_{2}\otimes Q_{1}\otimes \cdots \otimes Q_{1}\right)
\varphi  \notag \\
&&+\cdots +\left( Q_{1}\otimes Q_{1}\otimes \cdots \otimes Q_{2}\right)
\varphi -(Nr-1)\left( Q_{1}\otimes Q_{1}\otimes \cdots \otimes Q_{1}\right)
\varphi  \notag \\
&=&\sum_{i=1}^{N}\sum_{j=1}^{r}\mathbb{E}\varphi (0,\ldots ,0,\zeta
_{j,i},0,\ldots ,0)-(Nr-1)\varphi (0,0,\ldots ,0),  \notag
\end{eqnarray}%
where $\zeta _{j,i}$ are i.i.d. random variables with the law (\ref%
{eq:weak-euler-discrete-apprx-gauss}). Using %
\eqref{eq:ito-sde-weak-euler-functional-mean-tensor}, %
\eqref{eq:smolyak-level2-gauss-hermite-prob}, Taylor's expansion and
symmetry of $\zeta _{j,i}$, we obtain the relationship between the weak
Euler scheme (\ref{eq:ito-sde-vec-weak-euler}) and the SGC (\ref%
{eq:smolyak-tensor-like}): 
\begin{eqnarray}
\mathbb{E}f(\tilde{X}_{N})-A(2,N)\varphi &=&\mathbb{E}\varphi (\zeta
_{1,1},\ldots ,\zeta _{r,1},\ldots ,\zeta _{1,N},\ldots ,\zeta _{r,N})
\label{eq:sg-ps-exact-remainder} \\
&&-\sum_{i=1}^{N}\sum_{j=1}^{r}\mathbb{E}\varphi (0,\ldots ,0,\zeta
_{j,i},0,\ldots ,0)-(Nr-1)\varphi (0,0,\ldots ,0)  \notag \\
&=&\sum_{\left\vert \alpha \right\vert =4}\frac{4}{\alpha !}\mathbb{E}\left[
\prod_{i=1}^{N}\prod_{j=1}^{r}(\zeta _{j,i})^{\alpha
_{j,i}}\int_{0}^{1}(1-z)^{3}D^{\alpha }\varphi (z\zeta _{1,1},\ldots ,z\zeta
_{r,N})\,dz\right]  \notag \\
&&-\frac{1}{3!}\sum_{i=1}^{N}\sum_{j=1}^{r}\mathbb{E}\left[ \zeta
_{j,i}^{4}\int_{0}^{1}(1-z)^{3}\frac{\partial ^{4}}{\left( \partial
y_{j,i}\right) ^{4}}\varphi (0,\ldots ,0,z\zeta _{j,i},0,\ldots ,0)\,dz%
\right] ,  \notag
\end{eqnarray}%
where the multi-index $\alpha =(\alpha _{1,1},\ldots ,\alpha _{r,N})\in 
\mathbb{N}_{0}^{rN}$, $\left\vert \alpha \right\vert
=\sum_{i=1}^{N}\sum_{j=1}^{r}\alpha _{j,i}$, $\alpha
!=\prod_{i=1}^{N}\prod_{j=1}^{r}\alpha _{j,i}!$ and $D^{\alpha }=\frac{%
\partial ^{|\alpha |}}{(\partial y_{1,1})^{\alpha _{1,1}}\cdots (\partial
y_{r,N})^{\alpha _{r,N}}}.$ {\color{black}Similarly, we can write down a probabilistic interpretation for any $L$ and derive a similar error representation. For example, we have for $L=3$ that}
{\color{black}
\begin{eqnarray*}
&& \mean{\varphi(\zeta_{1,1}^{(3)},\cdots, \zeta_{r,N}^{(3)})} - A(3,N) \varphi\\
&=&   \sum_{\abs{\alpha}=6} \frac{6}{\alpha !}\mean{\prod_{i=1}^{N}\prod_{j=1}^{r}(\zeta _{j,i}^{(3)})^{\alpha
_{j,i}}  \int_0^1 (1-z)^5D^\alpha  \varphi(z\zeta_{1,1}^{(3)}, \cdots,z\zeta_{r,N}^{(3)})\,d z }\\
&&-  \sum_{\substack{  \abs{\alpha}=\alpha_{j,i}+\alpha_{l,k}=6\\ (j-l)^2+(i-k)^2\neq0}} \frac{6}{\alpha_{j,i}!\alpha_{k,l}!}\mean{ (\zeta _{j,i}^{(3)})^{\alpha
_{j,i}} (\zeta _{l,k}^{(3)})^{\alpha
_{l,k}}\int_0^1 (1-z)^5 D^\alpha  \varphi(\cdots,z\zeta_{j,i}^{(3)},0,\cdots,0,z\zeta_{l,k}^{(3)},\cdots)\,dz }\\
&&-  \sum_{i=1}^N\sum_{j=1}^r\frac{6}{6 !}\mean{(\zeta_{j,i} )^6 \int_0^1 (1-z)^5 D^\alpha  \varphi(0,\cdots,z\zeta_{j,i},\cdots,0)\,dz},
\end{eqnarray*}
where $\zeta_{j,i}$ are defined in \eqref{eq:weak-euler-discrete-apprx-gauss} and  $\zeta _{j,i}^{(3)}$ are   i.i.d. random variables with the law  $P(\zeta _{j,i}^{(3)} =\pm \sqrt{3})=1/6$, $P(\zeta _{j,i}^{(3)}=0)=2/3$. 
}

The error of the SGC applied to weak-sense approximation of SDE is further studied in Section~\ref%
{sec:sg-errest-illus-b}.

\subsubsection{Second-order schemes}
In the SGC context, it is beneficial to exploit higher-order or
higher-accuracy schemes for approximating the SDE \eqref{eq:ito-sde-vector}
because they can allow us to reach a desired accuracy using larger time step
sizes and therefore less random variables than the first-order Euler scheme (%
\ref{eq:ito-sde-vec-strong-euler}) or \eqref{eq:ito-sde-vec-weak-euler}. For
instance, we can use the second-order weak scheme for %
\eqref{eq:ito-sde-vector} (see, e.g. \cite[Chapter 2]{MilTre-B04}): 
\begin{eqnarray}
X_{k+1} &=&X_{k}+ha(t_{k},X_{k})+\sqrt{h}\sum_{i=1}^{r}\sigma
_{i}(t_{k},X_{k})\xi _{i,k+1}+\frac{h^{2}}{2}\mathfrak{L}a(t_{k},X_{k})
\label{eq:ito-sde-vec-weak-2nd-order} \\
&&+h\sum_{i=1}^{r}\sum_{j=1}^{r}\Lambda _{i}\sigma _{j}(t_{k},X_{k})\eta
_{i,j,k+1}+\frac{h^{3/2}}{2}\sum_{i=1}^{r}(\Lambda _{i}a(t_{k},X_{k})+%
\mathfrak{L}\sigma _{i}(t_{k,}X_{k}))\xi _{i,k+1},  \notag \\
k &=&0,\ldots ,N-1,  \notag
\end{eqnarray}%
where $X_{0}=x_{0};$ $\eta _{i,j}=\frac{1}{2}\xi _{i}\xi _{j}-\gamma
_{i,j}\zeta _{i}\zeta _{j}/2$ with $\gamma _{i,j}=-1$ if $i<j$ and $\gamma
_{i,j}=1$ otherwise; 
\begin{equation*}
\Lambda _{l}=\sum_{i=1}^{m}\sigma _{l}^{i}\frac{\partial }{\partial x_{i}}%
,\quad \mathfrak{L}=\frac{\partial }{\partial t}+\sum_{i=1}^{m}a^{i}\frac{%
\partial }{\partial x_{i}}+\frac{1}{2}\sum_{l=1}^{r}\sum_{i,j=1}^{m}\sigma
_{l}^{i}\sigma _{l}^{i}\frac{\partial ^{2}}{\partial x_{i}\partial x_{j}};
\end{equation*}%
and $\xi _{i,k+1}$ and $\zeta _{i,k+1}$ are mutually independent random
variables with Gaussian distribution or with the laws $P(\xi =0)=2/3,\text{ }%
P(\xi =\pm \sqrt{3})=1/6\text{ \ and \ }P(\zeta =\pm 1)=1/2.$ The following
error estimate holds for (\ref{eq:ito-sde-vec-weak-2nd-order}) (see e.g. 
\cite[Chapter 2]{MilTre-B04}): 
\begin{equation*}
\left\vert \mathbb{E}f(X(T))-Ef(X_{N})\right\vert \leq Kh^{2}.
\end{equation*}

Roughly speaking, to achieve $O(h)$ accuracy using %
\eqref{eq:ito-sde-vec-weak-2nd-order}, we need only $\sqrt{2rN}$ ($\sqrt{rN}$
in the case of additive noise) random variables, while we need $rN$ random
variables for the Euler scheme \eqref{eq:ito-sde-vec-strong-euler}. This
reduces the dimension of the random space and hence can increase efficiency
and widen applicability of SGC methods (see, in particular Example~4.1 in
Section~\ref{sec:num-experiments-sg}\ for a numerical illustration). We note
that when noise intensity is relatively small, we can use high-accuracy
low-order schemes designed for SDE with small noise \cite{MilTre97} (see
also \cite[Chapter 3]{MilTre-B04}) in order to achieve a desired accuracy
using less number of random variables than the Euler scheme %
\eqref{eq:ito-sde-vec-strong-euler}.

\subsection{Illustrative examples\label{sec:sg-errest-illus-b}}

In this section we show limitations of the use of SGC in weak approximation
of SDE. To this end, it is convenient and sufficient to consider the scalar
linear SDE 
\begin{equation}
dX=\lambda Xdt+\varepsilon \,dw(t),\quad X_{0}=1,  \label{linearSDE}
\end{equation}%
where $\lambda \ $and $\varepsilon $ are some constants.

We will compute expectations $\mathbb{E}f(X(T))$ for some $f(x)$ and $X(t)$
from (\ref{linearSDE}) by applying the Euler scheme (\ref%
{eq:ito-sde-vec-strong-euler}) and the SGC (\ref{eq:smolyak-tensor-like}).
This simple example provides us with a clear insight when algorithms of this
type are able to produce accurate results and when they are likely to fail.
Using direct calculations, we first (see Examples~\ref%
{exm:linear-sde-moments}--\ref{exm:linear-sde-cos} below) derive an estimate
for the error $\left\vert \mathbb{E}f(X_{N})-A(2,N)\varphi \right\vert $
with $X_{N}$ from (\ref{eq:ito-sde-vec-strong-euler}) applied to (\ref%
{linearSDE}) and for some particular $f(x)$.  {\color{black}This will illustrate how the error of SGC with 
practical level  (no more than six)  behaves.}
Then (Proposition~\ref%
{prop:weak-apprx-euler-sg}) we obtain an estimate for the error $\left\vert 
\mathbb{E}f(X_{N})-A(L,N)\varphi \right\vert $ for a smooth $f(x)$ which
grows not faster than a polynomial function at infinity. We will observe
that the considered algorithm is not convergent in time step $h$ and {\color{black}   the algorithm 
is not convergent in level $L$ unless when noise
intensity and integration time are small.}

It follows from \eqref{eq:ito-sde-vec-strongEuler-error} and %
\eqref{eq:ito-sde-vec-weakEuler-error} that 
\begin{eqnarray}
\left\vert \mathbb{E}f(X_{N})-A(L,N)\varphi \right\vert &\leq &\left\vert 
\mathbb{E}f(\tilde{X}_{N})-A(L,N)\varphi \right\vert +|\mathbb{E}f(X_{N})-%
\mathbb{E}f(\tilde{X}_{N})|  \label{eq:simple-exam-error} \\
&\leq &\left\vert \mathbb{E}f(\tilde{X}_{N})-A(L,N)\varphi \right\vert +Kh, 
\notag
\end{eqnarray}%
where $\tilde{X}_{N}$ is from the weak Euler scheme %
\eqref{eq:ito-sde-vec-weak-euler} applied to (\ref{linearSDE}), which can be
written as $\tilde{X}_{N}=\displaystyle(1+\lambda
h)^{N}+\sum_{j=1}^{N}(1+\lambda h)^{N-j}\varepsilon \sqrt{h}\zeta _{j}.$
Introducing the function 
\begin{equation}
\bar{X}(N;y)=(1+\lambda h)^{N}+\sum_{j=1}^{N}(1+\lambda h)^{N-j}\varepsilon 
\sqrt{h}y_{j},  \label{Xy}
\end{equation}%
we see that $\tilde{X}_{N}=\bar{X}(N;\zeta _{1},\ldots ,\zeta _{N}).$ We
have 
\begin{equation}
\frac{\partial }{\partial y_{i}}\bar{X}(N;y)=(1+\lambda h)^{N-i}\varepsilon 
\sqrt{h}\text{\ \ and\ \ }\frac{\partial ^{2}}{\partial y_{i}\partial y_{j}}%
\bar{X}(N;y)=0.  \label{Xder}
\end{equation}%
Then we obtain from (\ref{eq:sg-ps-exact-remainder}): 
\begin{eqnarray}
R &:&=\mathbb{E}f(\tilde{X}_{N})-A(2,N)\varphi  \label{Rnew} \\
&=&\varepsilon ^{4}h^{2}\sum_{\left\vert \alpha \right\vert =4}\frac{4}{%
\alpha !}\mathbb{E}\left[ \prod_{i=1}^{N}(\zeta _{i}(1+\lambda
h)^{N-i})^{\alpha _{i}}\int_{0}^{1}(1-z)^{3}z^{4}\frac{d^{4}}{dx^{4}}f(\bar{X%
}(N,z\zeta _{1},\ldots ,z\zeta _{N}))\,dz\right]  \notag \\
&&-\frac{1}{3!}\varepsilon ^{4}h^{2}\sum_{i=1}^{N}\mathbb{E}\left[ \zeta
_{i}^{4}\int_{0}^{1}(1-z)^{3}z^{4}\frac{d^{4}}{dx^{4}}f(\bar{X}(0,\ldots
,0,z\zeta _{i},0,\ldots ,0))\,(1+\lambda h)^{4N-4i}dz\right] .  \notag
\end{eqnarray}

\subsubsection{Non-Convergence in time step $h$}

We will illustrate  {\color{black}no convergence} in $h$ {\color{black}for SGC  for  levels  two and three through two examples, 
where sharp error estimates of $\abs{\mathbb{E}f(X_{N})-A(2,N)\varphi}$ are derived for  SGC. Higher level SGC can be also considered but the conclusion  do  not change. In contrast,    
the algorithm of tensor-product integration in random space and the strong Euler scheme in time  
(i.e., the weak Euler scheme  \eqref{eq:ito-sde-vec-weak-euler}-\eqref{eq:weak-euler-discrete-apprx-gauss})
is convergent with order one in $h$.  We also note that in practice,  typically  SGC with level no more than six are employed.}
\begin{exm}
\label{exm:linear-sde-moments}%
\upshape
For $f(x)=x^{p}$ with $p=1,2,3$, it follows from (\ref{Rnew}) that $R=0,$
i.e., SGC does not introduce any additional error, and hence by %
\eqref{eq:simple-exam-error} 
\begin{equation*}
\left\vert \mathbb{E}f(X_{N})-A(2,N)\varphi \right\vert \leq Kh,\quad
f(x)=x^{p},\quad p=1,2,3.
\end{equation*}%
For $f(x)=x^{4}$, we get from \eqref{Rnew}: 
\begin{eqnarray*}
R &=&\frac{6}{35}\varepsilon
^{4}h^{2}\sum_{i=1}^{N}\sum_{j=i+1}^{N}(1+\lambda h)^{4N-2i-2j} \\
&=&\frac{6}{35}\varepsilon ^{4}\times \left\{ 
\begin{tabular}{ll}
$\frac{(1+\lambda h)^{2N}-1}{\lambda ^{2}(2+\lambda h)^{2}}\left[ \frac{%
(1+\lambda h)^{2N}+1}{1+(1+\lambda h)^{2}}-1\right] ,$ & $\lambda \neq 0,\ \
1+\lambda h\neq 0,$ \\ 
$\frac{T^{2}}{2}-\frac{Th}{2},$ & $\lambda =0.$%
\end{tabular}%
\right.
\end{eqnarray*}%
We see that $R$ does not go to zero when $h\rightarrow 0$ and that for
sufficiently small $h>0$ 
\begin{equation*}
\left\vert \mathbb{E}f(X_{N})-A(2,N)\varphi \right\vert \leq Kh+\frac{6}{35}%
\varepsilon ^{4}\times \left\{ 
\begin{tabular}{ll}
$\frac{1}{\lambda ^{2}}(1+e^{4T\lambda }),$ & $\lambda \neq 0,$ \\ 
$\frac{T^{2}}{2},$ & $\lambda =0.$%
\end{tabular}%
\right.
\end{equation*}
\end{exm}

We observe that the SGC algorithm does not converge with $h\rightarrow 0$
for higher moments. In the considered case of linear SDE, increasing the
level $L$ of SGC leads to the SGC error $R$ being $0$ for higher moments,
e.g., for $L=3$ the error $R=0$ for up to $5$th moment but the algorithm
will not converge in $h$ for $6$th moment and so on (see Proposition~\ref%
{prop:weak-apprx-euler-sg} below). Further (see the continuation of the
illustration below), in the case of, e.g. $f(x)=\cos x$ for any $L$ this
error $R$ {\color{black} does not converge in $h$}, which is also the case for nonlinear SDE. We also
note that one can expect that this error $R$ is small when noise intensity
is relatively small and either time $T$ is small or SDE has, in some sense,
stable behavior (in the linear case it corresponds to $\lambda <0).$

\begin{exm}
\label{exm:linear-sde-cos}%
\upshape%
Now consider $f(x)=\cos (x).$ It follows from \eqref{Rnew} that 
\begin{eqnarray*}
R &=&\varepsilon ^{4}h^{2}\sum_{\left\vert \alpha \right\vert =4}\frac{4}{%
\alpha !}\mathbb{E}\left[ \prod_{i=1}^{N}(\zeta _{i}(1+\lambda
h)^{N-i})^{\alpha _{i}}\int_{0}^{1}(1-z)^{3}z^{4}\cos ((1+\lambda
h)^{N}\right. \\
&&\left. +z\sum_{j=1}^{N}(1+\lambda h)^{N-j}\varepsilon \sqrt{h}\zeta
_{j})\,dz\right] \\
&&-\frac{1}{3!}\varepsilon ^{4}h^{2}\sum_{i=1}^{N}(1+\lambda
h)^{4N-4i}\int_{0}^{1}(1-z)^{3}z^{4}\mathbb{E}[\zeta _{i}^{4}\cos
((1+\lambda h)^{N}+z(1+\lambda h)^{N-i}\varepsilon \sqrt{h}\zeta _{i})]\,dz
\end{eqnarray*}%
and after routine calculations we obtain 
\begin{eqnarray*}
R &=&\varepsilon ^{4}h^{2}\cos ((1+\lambda h)^{N})\left[ \left( \frac{1}{6}%
\sum_{i=1}^{N}(1+\lambda
h)^{4N-4i}+2\sum_{i=1}^{N}\sum_{j=i+1}^{N}(1+\lambda h)^{4N-2i-2j}\right)
\right. \\
&&\times \int_{0}^{1}(1-z)^{3}z^{4}\prod_{l=1}^{N}\cos (z(1+\lambda
h)^{N-l}\varepsilon \sqrt{h})dz\\
&&+\left( \frac{2}{3}\sum_{i,j=1;i\neq j}^{N}(1+\lambda h)^{4N-3i-j}+2\sum 
_{\substack{ k,i,j=1  \\ i\neq j,i\neq k,k\neq j}}^{N}(1+\lambda
h)^{4N-2k-i-j}\right) \\
&&\times \int_{0}^{1}(1-z)^{3}z^{4}\prod_{l=i,j}\sin (z(1+\lambda
h)^{N-l}\varepsilon \sqrt{h})\prod_{\substack{ l=1  \\ l\neq i,l\neq j}}%
^{N}\cos (z(1+\lambda h)^{N-l}\varepsilon \sqrt{h})dz \\
&&+4\sum_{\substack{ i,j,k,m=1  \\ i\neq j,i\neq k,i\neq m,j\neq k,j\neq
m,k\neq m}}^{N}(1+\lambda h)^{4N-i-j-k-m} \\
&&\times \int_{0}^{1}(1-z)^{3}z^{4}\prod_{l=i,j,{\color{black}k},m}\sin (z(1+\lambda
h)^{N-l}\varepsilon \sqrt{h})\prod_{\substack{ l=1  \\ l\neq i,l\neq j,l\neq
k,l\neq m}}^{N}\cos (z(1+\lambda h)^{N-l}\varepsilon \sqrt{h})dz \\
&&\left. -\frac{1}{6}\sum_{i=1}^{N}(1+\lambda
h)^{4N-4i}\int_{0}^{1}(1-z)^{3}z^{4}\cos (z(1+\lambda h)^{N-i}\varepsilon 
\sqrt{h})]\,dz\right] .
\end{eqnarray*}%
It is not difficult to see that $R$ does not go to zero when $h\rightarrow
0. $  {\color{black}In fact,} taking into account that $|\sin (z(1+\lambda
h)^{N-j}\varepsilon \sqrt{h})|\leq z(1+\lambda h)^{N-j}\varepsilon \sqrt{h},$ {\color{black} and 
that there are $N^4$   terms of order $h^4$ and $N^3$ terms of  order $h^3$},
we get for sufficiently small $h>0$%
\begin{equation*}
|R|\leq C\varepsilon ^{4} (1+e^{4T\lambda }),
\end{equation*}%
where $C>0$ is independent of $\varepsilon $ and $h.$ Hence 
\begin{equation}
\left\vert \mathbb{E}f(X_{N})-A(2,N)\varphi \right\vert \leq C\varepsilon
^{4}(1+e^{4T\lambda })+Kh,  \label{eq:estimate-euler-cos}
\end{equation}%
and we have arrived at a similar conclusion for $f(x)=\cos x$ as for $%
f(x)=x^{4}$. {\color{black}Similarly, we can also have for $L=3$ that
\begin{equation*}
 \abs{\mathbb{E}f(X_{N})-A(3,N)\varphi} \leq C\varepsilon
^{6}(1+e^{6T\lambda })+Kh.
\end{equation*}
This example shows for $L=3$, the error of SGC with the Euler scheme in time does not converge in $h$.}
\end{exm}

\subsubsection{\color{black} Error estimate for SGC with fixed level}
{\color{black} 
Now we will address  the effect  of the SGC level $L$.} To this end, we will need the following error
estimate of a Gauss-Hermite quadrature. Let $\psi (y),$ $y\in \mathbb{R}$,
be a sufficiently smooth function which itself and its derivatives are
growing not faster than a polynomial at infinity. Using the Peano kernel
theorem (see e.g. \cite{DavRab-B84}) and that a Gauss-Hermite quadrature
with $n$-nodes has the order of polynomial exactness $2n-1,$ we obtain for
the approximation error $R_{n,\gamma }\psi $ of the Gauss-Hermite quadrature 
$Q_{n}\psi $: 
\begin{equation}
R_{n,\gamma }(\psi ):=Q_{n}\psi -I_{1}\psi =\int_{\mathbb{R}}\frac{d^{\gamma
}}{dy^{\gamma }}\varphi (y)R_{n,\gamma }(\Gamma _{y,\gamma })\,dy,\quad
1\leq \gamma \leq 2n,  \label{eq:gaussHer-quad-remainder}
\end{equation}%
where $\Gamma _{y,\gamma }(z)=(z-y)^{\gamma -1}/(\gamma -1)!$ if $z\geq y$
and $0$ otherwise. One can show (see, e.g. \cite[Theorem 2]{MasMon94}) that
there is a constant $c>0$ independent of $n$ and $y$ such that for any $%
0<\beta <1$%
\begin{equation}
\left\vert R_{n,\gamma }(\Gamma _{y,\gamma })\right\vert \leq \frac{c}{\sqrt{%
2\pi }}n^{-\gamma /2}\exp \left( -\frac{\beta y^{2}}{2}\right) ,\quad 1\leq
\gamma \leq 2n.  \label{eq:estimate-GaussHer-peano-kernel}
\end{equation}%
We also note that (\ref{eq:estimate-GaussHer-peano-kernel}) and the triangle
inequality imply, for $1\leq \gamma \leq 2(n-1)$: 
\begin{equation}
\left\vert R_{n,\gamma }(\Gamma _{y,\gamma })-R_{n-1,\gamma }(\Gamma
_{y,\gamma })\right\vert \leq \frac{c}{\sqrt{2\pi }}[n^{-\gamma
/2}+(n-1)^{-\gamma /2}]\exp \left( -\frac{\beta y^{2}}{2}\right) .
\label{eq:estimate-GaussHer-peano-kernel-diff}
\end{equation}

Now we consider an error of the sparse grid rule %
\eqref{eq:smolyak-tensor-like} accompanied by the Euler scheme %
\eqref{eq:ito-sde-vec-strong-euler} for computing expectations of solutions
to \eqref{linearSDE}.

\begin{prop}
\label{prop:weak-apprx-euler-sg} Assume that a function $f(x)$ and its
derivatives up to $2L$-th order satisfy the polynomial growth condition $(%
\ref{f_cond})$. Let $X_{N}$ be obtained by the Euler scheme %
\eqref{eq:ito-sde-vec-strong-euler} applied to the linear SDE $(\ref%
{linearSDE})$ and $A(L,N)\varphi $ be the sparse grid rule $(\ref%
{eq:smolyak-tensor-like})$ with level $L$ applied to the integral
corresponding to $\mathbb{E}f(X_{N})$ as in $(\ref{eq:strong-euler-mean-def}%
) $. Then for {\color{black} any $L$} and sufficiently small $h>0$ 
\begin{equation}  \label{propstat}
\left\vert \mathbb{E}f(X_{N})-A(L,N)\varphi \right\vert \leq K\varepsilon
^{2L}(1+e^{\lambda (2L+\varkappa )T}){\color{black}\left( 1+\left( 3c/2\right) ^{L\wedge N}\right)
\beta ^{-(L\wedge N)/2} }T^{L},
\end{equation}%
where $K>0$ is independent of $h,$ $L$ and $N$; $c$ and $\beta $ are from %
\eqref{eq:estimate-GaussHer-peano-kernel}; $\varkappa $ is from %
\eqref{f_cond}.
\end{prop}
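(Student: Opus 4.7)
The plan is to exploit that, for the linear SDE \eqref{linearSDE}, the one-step map $\bar X(N;y)$ in \eqref{Xy} is \emph{affine} in $y$, so $\varphi(y)=f(\bar X(N;y))$ has explicit partial derivatives, and then convert the multivariate sparse-grid error into an iterated Peano-kernel estimate that can be bounded one direction at a time. First, since the Euler scheme \eqref{eq:ito-sde-vec-strong-euler} applied to \eqref{linearSDE} uses Gaussian increments, $\mathbb{E}f(X_N)$ equals the Gaussian integral $I_N\varphi$ exactly, hence
\[
\mathbb{E}f(X_N)-A(L,N)\varphi=I_N\varphi-A(L,N)\varphi .
\]
Writing $\Delta_n:=Q_n-Q_{n-1}$ with $Q_0:=0$ and tensorising the telescoping identity $I_1=\sum_{n\ge1}\Delta_n$ (valid on $\varphi$ by its polynomial growth) together with the tensor-difference form of $A(L,N)$ yields
\[
I_N\varphi-A(L,N)\varphi=\sum_{\substack{i_j\ge1,\ j=1,\dots,N\\|\mathbf i|\ge L+N}}(\Delta_{i_1}\otimes\cdots\otimes\Delta_{i_N})\varphi .
\]

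The next step is to bound each tensor-difference term by an iterated Peano-kernel estimate. For every admissible $\mathbf i$, I would allocate to its ``active'' coordinates (those with $i_j\ge2$) a non-negative integer vector $\gamma=(\gamma_1,\dots,\gamma_N)$ with $\gamma_j\le 2(i_j-1)$, $\gamma_j=0$ when $i_j=1$, and total order $|\gamma|=2(L\wedge N)$; such an allocation exists because $|\mathbf i|\ge L+N$ implies $\sum_j 2(i_j-1)\ge 2L\ge 2(L\wedge N)$. Applying \eqref{eq:estimate-GaussHer-peano-kernel-diff} successively in each active direction gives
\[
\bigl|(\Delta_{i_1}\otimes\cdots\otimes\Delta_{i_N})\varphi\bigr|\le C\!\prod_{j:\,i_j\ge2}\!\![i_j^{-\gamma_j/2}+(i_j-1)^{-\gamma_j/2}]\int_{\mathbb R^N}|D^{\gamma}\varphi(y)|\,e^{-\beta\|y\|^2/2}\,dy .
\]
Using the linearity of $\bar X(N;y)$ in $y$ recorded in \eqref{Xder} yields the explicit formula
\[
D^{\gamma}\varphi(y)=f^{(|\gamma|)}(\bar X(N;y))\prod_{j=1}^N\bigl[(1+\lambda h)^{N-j}\varepsilon\sqrt h\bigr]^{\gamma_j},
\]
so the factor $\varepsilon^{2(L\wedge N)}h^{L\wedge N}$ extracts cleanly. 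The polynomial bound \eqref{f_cond} on $f^{(|\gamma|)}$ combined with $|\bar X(N;y)|\le(1+\lambda h)^N+\varepsilon\sqrt h\sum_j|1+\lambda h|^{N-j}|y_j|$ reduces the Gaussian-weighted integral to moments of an affine form in $y$, producing the factor $\beta^{-(L\wedge N)/2}(1+e^{\varkappa\lambda T})$. The Riemann-sum inequality $\sum_{j=1}^N(1+\lambda h)^{2(N-j)}h\le T e^{2\lambda T}$, raised to the $(L\wedge N)$-th power and combined with the remaining factors of $h$ and $N$ from the allocation, yields the $T^L e^{2L\lambda T}$ factor.

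Finally, summing over the multi-indices $\mathbf i$ with $|\mathbf i|\ge L+N$, the remaining series $\sum_{i_j\ge 2}[i_j^{-\gamma_j/2}+(i_j-1)^{-\gamma_j/2}]$ telescopes into a product of convergent geometric-like tails that contributes the combinatorial constant $(3c/2)^{L\wedge N}$ in \eqref{propstat}; collecting the three factors (combinatorial, exponential, and polynomial-in-$T$) gives the advertised bound. The principal obstacle is matching the global constraint $|\mathbf i|\ge L+N$ to a coherent per-coordinate Taylor allocation $\gamma$ that simultaneously respects the Peano constraint $\gamma_j\le 2(i_j-1)$ and makes the sum over $\mathbf i$ absolutely convergent against the negatively-weighted SGC; once this allocation is fixed, the rest is routine Gaussian-moment bookkeeping. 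A secondary subtlety is the $L\wedge N$ clipping: when the sparse-grid level exceeds the number of random variables, the bound must degenerate to the tensor-product Gauss--Hermite error, which is already exact on $\varphi$ of total degree at most $2N-1$.
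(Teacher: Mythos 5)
Your overall strategy---reduce to the Gaussian integral $I_N\varphi$, exploit the affineness of $\bar X(N;y)$ via \eqref{Xder} to pull out powers of $\varepsilon\sqrt h$, and control each direction with the Peano-kernel bounds \eqref{eq:estimate-GaussHer-peano-kernel}--\eqref{eq:estimate-GaussHer-peano-kernel-diff}---is the same as the paper's. But your choice of decomposition creates a gap that your own sketch flags and does not close. You write $I_N\varphi-A(L,N)\varphi$ as the \emph{infinite} sum over all multi-indices with $|\mathbf i|\ge L+N$, and then must make that sum absolutely convergent against per-direction factors $i_j^{-\gamma_j/2}+(i_j-1)^{-\gamma_j/2}$ under a fixed total derivative budget. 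With only $|\gamma|=2(L\wedge N)$ to distribute, any coordinate receiving $\gamma_j\le 2$ contributes a divergent tail $\sum_{i\ge2}[i^{-\gamma_j/2}+(i-1)^{-\gamma_j/2}]$, and the number of multi-indices at each shell $|\mathbf i|=L+N+k$ grows polynomially in $k$; no allocation rule is exhibited that simultaneously respects $\gamma_j\le 2(i_j-1)$, keeps $|\gamma|\le 2L$ (all the smoothness you have on $f$), and makes the sum converge. The paper avoids this entirely by using the finite telescoping identity of Nobile--Tempone--Webster (their Lemma 3.4), equation \eqref{eq34}: the remainder is a sum of only $N$ blocks $S(L,l)$, each a \emph{finite} sum over $i_1+\cdots+i_l=L+l-1$ in which the last active direction carries the exact error $I_1^{(l)}-Q_{i_l}^{(l)}$ (not an infinite sum of differences) and all directions beyond $l$ carry exact integrals. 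Convergence is then a non-issue.

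The second problem is the derivative bookkeeping. In the paper's decomposition the derivative orders are $2(i_k-1)$ for $k<l$ and $2i_l$ for $k=l$, which sum to exactly $2L$ in \emph{every} term; this is what produces $\varepsilon^{2L}h^L$ via \eqref{Dfi} and hence, after summing $\binom{L+l-2}{L-1}\le l^{L-1}$ over $l\le N$, the factor $T^L$. Meanwhile the \emph{number} of directions actually differentiated is $\#G_{l-1}+1\le L\wedge l$, which is what controls the $(3c/2)^{L\wedge N}$ and $\beta^{-(L\wedge N)/2}$ factors. Your allocation sets the total order to $2(L\wedge N)$, conflating these two quantities: it would yield $\varepsilon^{2(L\wedge N)}h^{L\wedge N}$ and hence $T^{L\wedge N}$, which does not match the claimed $\varepsilon^{2L}T^L$ when $L>N$, and even for $L\le N$ it leaves unexplained how the count of contributing multi-indices combines with $h^L$ to give $T^L$. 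To repair the argument you should replace the full telescoping by the finite decomposition \eqref{eq34}--\eqref{eq:smolyak-int-recursive-component} and take the derivative order in each direction to be dictated by the multi-index itself ($2i_l$ in the last active direction, $2i_k-2$ in the earlier ones, with $\mathcal U_1\psi=\psi(0)$ when $i_k=1$), after which your Gaussian-moment bookkeeping goes through as in the paper.
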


\begin{proof}
We recall (see (\ref{eq:strong-euler-mean-def})) that 
\begin{equation*}
\mathbb{E}f(X_{N})=I_{N}\varphi =\frac{1}{(2\pi )^{N/2}}\int_{\mathbb{R}%
^{rN}}\varphi (y_{1},\ldots ,y_{N})\exp \left( {-\frac{1}{2}%
\sum_{i=1}^{N}y_{i}^{2}}\right) \,dy.
\end{equation*}%
Introduce the integrals 
\begin{equation}
I_{1}^{(k)}\varphi =\frac{1}{\sqrt{2\pi }}\int_{\mathbb{R}}\varphi
(y_{1},\ldots ,y_{k},\ldots ,y_{N})\exp \left( {-\frac{{y_{k}^{2}}}{2}}%
\right) dy_{k},\ \ k=1,\ldots ,N,  \label{Ik1}
\end{equation}%
and their approximations $Q_{n}^{(k)}$ by the corresponding one-dimensional
Gauss-Hermite quadratures with $n$ nodes. Also, let $\mathcal{U}%
_{i_{k}}^{(k)}=Q_{i_{k}}^{(k)}-Q_{i_{k}-1}^{(k)}.$

Using (\ref{eq:smolyak-tensor-like}) and the recipe from the proof of
Lemma~3.4 in \cite{NobTW08}, we obtain 
\begin{equation}
I_{N}\varphi -A(L,N)\varphi =\sum_{l=2}^{N}S(L,l)\otimes _{k=l+1}^{N}{I}%
_{1}^{(k)}\varphi +({I}_{1}^{(1)}-Q_{L}^{(1)})\otimes _{k=2}^{N}{I}%
_{1}^{(k)}\varphi ,  \label{eq34}
\end{equation}%
where 
\begin{equation}
S(L,l)=\sum_{i_{1}+\cdots +i_{l-1}+i_{l}=L+l-1}\otimes _{k=1}^{l-1}\mathcal{U%
}_{i_{k}}^{(k)}\otimes ({I}_{1}^{(l)}-Q_{i_{l}}^{(l)}).
\label{eq:smolyak-int-recursive-component}
\end{equation}

Due to (\ref{eq:gaussHer-quad-remainder}), we have for $n>1$ and $1\leq
\gamma \leq 2(n-1)$ 
\begin{eqnarray}
\mathcal{U}_{n}\psi &=&Q_{n}\psi -Q_{n-1}\psi =[Q_{n}\psi -I_{1}(\psi
)]-[Q_{n-1}\psi -I_{1}(\psi )]  \label{eq:gaussHer-quad-difference} \\
&=&\int_{\mathbb{R}}\frac{d^{\gamma }}{dy^{\gamma }}\psi (y)[R_{n,\gamma
}(\Gamma _{y,\gamma })-R_{n-1,\gamma }(\Gamma _{y,\gamma })]\,dy,  \notag
\end{eqnarray}%
and for $n=1$ 
\begin{equation}
\mathcal{U}_{n}\psi =Q_{1}\psi -Q_{0}\psi =Q_{1}\psi =\psi (0).
\label{eq:gaussHer-quad-differencen=1}
\end{equation}%
By \eqref{eq:smolyak-int-recursive-component}, (\ref{Ik1}) and %
\eqref{eq:gaussHer-quad-remainder}, we obtain for the first term in the
right-hand side of (\ref{eq34}): 
\begin{eqnarray*}
 S(L,l)\otimes _{n=l+1}^{N}I_{1}^{(n)}\varphi  
&=&\sum_{i_{1}+\cdots +i_{l}=L+l-1}\otimes _{n=1}^{l-1}\mathcal{U}%
_{i_{k}}^{(k)}\otimes ({I}_{1}^{(l)}-Q_{i_{l}}^{(l)})\otimes
_{n=l+1}^{N}I_{1}^{(n)}\varphi \\
&=&\sum_{i_{1}+\cdots +i_{l}=L+l-1}\otimes _{n=1}^{l-1}\mathcal{U}%
_{i_{k}}^{(k)}\otimes ({I}_{1}^{(l)}-Q_{i_{l}}^{(l)}) \\
&&\otimes \int_{\mathbb{R}^{N-l}}\varphi (y)\frac{1}{(2\pi )^{(N-l)/2}}\exp
(-\sum_{k=l+1}^{N}\frac{y_{k}^{2}}{2})\,dy_{l+1}\ldots dy_{N} \\
&=&-\sum_{i_{1}+\cdots +i_{l}=L+l-1}\otimes _{n=1}^{l-1}\mathcal{U}%
_{i_{k}}^{(k)}\otimes \int_{\mathbb{R}^{N-l+1}}\frac{d^{2i_{l}}}{%
dy_{l}^{2i_{l}}}\varphi (y)R_{i_{l},2i_{l}}(\Gamma _{y_{l},2i_{l}}) \\
&&\times \frac{1}{(2\pi )^{(N-l)/2}}\exp (-\sum_{k=l+1}^{N}\frac{y_{k}^{2}}{2%
})\,dy_{l}\ldots dy_{N}.
\end{eqnarray*}%
Now consider two cases: if $i_{l-1}>1$ then by %
\eqref{eq:gaussHer-quad-difference}:%
\begin{eqnarray*}
S(L,l)\otimes _{n=l+1}^{N}I_{1}^{(n)}\varphi &=&-\sum_{i_{1}+\cdots
+i_{l}=L+l-1}\otimes _{n=1}^{l-2}\mathcal{U}_{i_{k}}^{(k)}\otimes \int_{%
\mathbb{R}^{N-l+2}}\frac{d^{2i_{l-1}-2}}{dy_{l-1}^{2i_{l-1}-2}}\frac{%
d^{2i_{l}}}{dy_{l}^{2i_{l}}}\varphi (y)R_{i_{l},2i_{l}}(\Gamma
_{y_{l},2i_{l}}) \\
&&\times \lbrack R_{i_{l-1},2i_{l-1}-2}(\Gamma
_{y_{l-1},2i_{l-1}-2})-R_{i_{l-1}-1,2i_{l-1}-2}(\Gamma
_{y_{i_{l-1}},2i_{l-1}-2})] \\
&&\times \frac{1}{(2\pi )^{(N-l)/2}}\exp (-\sum_{k=l+1}^{N}\frac{y_{k}^{2}}{2%
})\,dy_{l-1}\ldots \,dy_{N}
\end{eqnarray*}%
otherwise (i.e., if $i_{l-1}=1)$ by \eqref{eq:gaussHer-quad-differencen=1}: 
\begin{eqnarray*}
S(L,l)\otimes _{n=l+1}^{N}I_{1}^{(n)}\varphi &=&-\sum_{i_{1}+\cdots
+i_{l}=L+l-1}\otimes _{n=1}^{l-2}\mathcal{U}_{i_{k}}^{(k)}\otimes \int_{%
\mathbb{R}^{N-l+1}}Q_{1}^{(l-1)}\frac{d^{2i_{l}}}{dy_{l}^{2i_{l}}}\varphi
(y)R_{i_{l},2i_{l}}(\Gamma _{y_{l},2i_{l}}) \\
&&\times \frac{1}{(2\pi )^{(N-l)/2}}\exp (-\sum_{k=l+1}^{N}\frac{y_{k}^{2}}{2%
})\,dy_{l}\ldots dy_{N}.
\end{eqnarray*}%
Repeating the above process for $i_{l-2},\ldots ,i_{1}$, we obtain 
\begin{gather}
S(L,l)\otimes _{n=l+1}^{N}I_{1}^{(n)}\varphi =\sum_{i_{1}+\cdots
+i_{l}=L+l-1}\int_{\mathbb{R}^{N-\#F_{l-1}}}[\otimes _{m\in
F_{l-1}}Q_{1}^{(m)}D^{2\alpha _{l}}\varphi (y)]
\label{eq:sg-component-recursive-l} \\
\times \mathcal{R}_{l,\alpha _{l}}(y_{1},\ldots ,y_{l})\frac{1}{(2\pi
)^{(N-l)/2}}\exp (-\sum_{k=l+1}^{N}\frac{y_{k}^{2}}{2})\prod_{n\in
G_{l-1}}\,dy_{n}\times \,dy_{l}\ldots dy_{N},  \notag
\end{gather}%
where the multi-index $\alpha _{l}=(i_{1}-1,\ldots ,i_{l-1}-1,i_{l},0,\ldots
,0)$ with the $m$-th element $\alpha _{l}^{m},$ the sets $%
F_{l-1}=F_{l-1}(\alpha _{l})=\left\{ m:~\alpha _{l}^{m}=0,\text{ }m=1,\ldots
,l-1\right\} $ and $G_{l-1}=G_{l-1}(\alpha _{l})=\left\{ m:~\alpha
_{l}^{m}>0,\text{ }m=1,\ldots ,l-1\right\} $, the symbols $\#F_{l-1}$ and $%
\#G_{l-1}$ stand for the number of elements in the corresponding sets, and 
 \begin{eqnarray*}
 \mathcal{R}_{l,\alpha _{l}}(y_{1},\ldots ,y_{l})=-R_{i_{l},2i_{l}}(\Gamma
_{y_{l},2i_{l}})\otimes _{n\in G_{l-1}}[R_{i_{n},2i_{n}-2}(\Gamma
_{y_{n},2i_{n}-2})-R_{i_{n}-1,2i_{n}-2}(\Gamma _{y_{n},2i_{n}-2})].
 \end{eqnarray*}
 
Note that $\#G_{l-1}\leq (L-1)\wedge (l-1)$ and also recall that $i_{j}\geq
1,$ $j=1,\ldots ,l.$

Using \eqref{eq:estimate-GaussHer-peano-kernel}, %
\eqref{eq:estimate-GaussHer-peano-kernel-diff} and the inequality 
\begin{equation*}
\prod_{n\in
G_{l-1}}[i_{n}^{-(i_{n}-1)}+(i_{n}-1)^{-(i_{n}-1)}]i_{l}^{-i_{l}}\leq
(3/2)^{\#G_{l-1}},
\end{equation*}%
we get 
\begin{eqnarray}
\left\vert \mathcal{R}_{l,\alpha }(y_{1},\ldots ,y_{l})\right\vert &\leq
&\prod_{n\in
G_{l-1}}[i_{n}^{-(i_{n}-1)}+(i_{n}-1)^{-(i_{n}-1)}]i_{l}^{-i_{l}}\frac{%
c^{\#G_{l-1}+1}}{(2\pi )^{(\#G_{l-1}+1)/2}}\ \ 
\label{eq:err-est-multi-peano-kernel} \\
&&\times \exp \left( -\sum_{n\in G_{l-1}}\frac{\beta y_{n}^{2}}{2}-\frac{%
\beta y_{l}^{2}}{2}\right)  \notag \\
&\leq &\frac{(3c/2)^{\#G_{l-1}+1}}{(2\pi )^{(\#G_{l-1}+1)/2}}\exp \left(
-\sum_{n\in G_{l-1}}\frac{\beta y_{n}^{2}}{2}-\frac{\beta y_{l}^{2}}{2}%
\right) .  \notag
\end{eqnarray}%
Substituting \eqref{eq:err-est-multi-peano-kernel} in %
\eqref{eq:sg-component-recursive-l}, we arrive at 
\begin{eqnarray}
&&\left\vert S(L,l)\otimes _{n=l+1}^{N}I_{1}^{(n)}\varphi \right\vert
\label{eq:sg-error-estimates-components-est} \\
&\leq &\sum_{i_{1}+\cdots +i_{l}=L+l-1}\frac{(3c/2)^{\#G_{l-1}+1}}{(2\pi
)^{(N-\#F_{l-1})/2}}\int_{\mathbb{R}^{N-\#F_{l-1}}}\left\vert \otimes _{m\in
F_{l-1}}Q_{1}^{(m)}D^{2\alpha _{l}}\varphi (y)\right\vert  \notag \\
&&\times \exp \left( -\sum_{n\in G_{l-1}}\frac{\beta y_{n}^{2}}{2}-\frac{%
\beta y_{l}^{2}}{2}-\sum_{k=l+1}^{N}\frac{y_{k}^{2}}{2}\right) \prod_{n\in
G_{l-1}}dy_{n}\times \,dy_{l}\ldots dy_{N}.  \notag
\end{eqnarray}%
Using (\ref{Xder}) and the assumption that $\left\vert \frac{d^{2L}}{dx^{2L}}%
f(x)\right\vert \leq K(1+|x|^{\varkappa })$ for some $K>0$ and $\varkappa
\geq 1$, we get 
\begin{eqnarray}
\left\vert D^{2\alpha _{l}}\varphi (y)\right\vert &=&\varepsilon
^{2L}h^{L}\left\vert \frac{d^{2L}}{dx^{2L}}f(\bar{X}(N,y))\right\vert
(1+\lambda h)^{2LN-2\sum_{i=1}^{l}i\alpha _{l}^{i}}  \label{Dfi} \\
&\leq &K\varepsilon ^{2L}h^{L}(1+\lambda h)^{2LN-2\sum_{i=1}^{l}i\alpha
_{l}^{i}}(1+|\bar{X}(N,y)|^{\varkappa }).  \notag
\end{eqnarray}%
Substituting (\ref{Dfi}) and (\ref{Xy}) in (\ref%
{eq:sg-error-estimates-components-est}) and doing further calculations, we
obtain 
\begin{gather}
\left\vert S(L,l)\otimes _{n=l+1}^{N}I_{1}^{(n)}\varphi \right\vert \leq
K\varepsilon ^{2L}h^{L}(1+e^{\lambda \varkappa T})(1+(3c/2)^{L\wedge
l})\beta ^{-(L\wedge l)/2}  \label{fff} \\
\times \sum_{i_{1}+\cdots +i_{l}=L+l-1}(1+\lambda
h)^{2LN-2\sum_{i=1}^{l}i\alpha _{l}^{i}}  \notag \\
\leq K\varepsilon ^{2L}h^{L}(1+e^{\lambda (2L+\varkappa
)T})(1+(3c/2)^{L\wedge l})\beta ^{-(L\wedge l)/2}\binom{L+l-2}{L-1}  \notag
\\
\leq K\varepsilon ^{2L}h^{L}(1+e^{\lambda (2L+\varkappa
)T})(1+(3c/2)^{L\wedge l})\beta ^{-(L\wedge l)/2}l^{L-1}.  \notag
\end{gather}%
with a new $K>0$ which does not depend on $h,$ $\varepsilon ,$ $L,$ $c,$ $%
\beta ,$ and $l.$ In the last line of (\ref{fff}) we used 
\begin{equation*}
\binom{L+l-2}{L-1}=\prod_{i=1}^{L-1}(1+\frac{l-1}{i})\leq \left[ \frac{1}{L-1%
}\sum_{i=1}^{L-1}(1+\frac{l-1}{i})\right] ^{L-1}\leq l^{L-1}.
\end{equation*}%
Substituting (\ref{fff}) in (\ref{eq34}) and observing that $\left\vert ({I}%
_{1}^{(1)}-Q_{L}^{(1)})\otimes _{k=2}^{N}{I}_{1}^{(k)}\varphi \right\vert $
is of order $O(h^{L})$, we arrive at (\ref{propstat}).
\end{proof}

\begin{rem}\label{rem:convergence-sgc}
Due to Examples~\ref{exm:linear-sde-moments} and \ref{exm:linear-sde-cos},
the error estimate $(\ref{propstat})$ proved in Proposition~\ref%
{prop:weak-apprx-euler-sg} is quite sharp and we conclude that in general
the SGC algorithm for weak approximation of SDE does not converge with
neither decrease of time step $h$ nor with increase of the level $L$. At the
same time, {\color{black} the algorithm is convergent in $L$ (when $L\leq N$)   if $\varepsilon^2T$ is  sufficiently small and   SDE has some stable behavior (e.g., $\lambda\leq0$). Furthermore,} the algorithm 
is   sufficiently accurate when noise intensity $%
\varepsilon $ and integration time $T$ are relatively small.
\end{rem}

\begin{rem}
It follows from the proof (see $(\ref{Dfi})$) that if $\frac{d^{2L}}{dx^{2L}}%
f(x)=0$ then the error $I_{N}(\varphi )-A(L,N)\varphi =0.$ We emphasize that
this is a feature of the linear SDE $(\ref{linearSDE})$ thanks to $(\ref%
{Xder})$, while in the case of nonlinear SDE this error remains of the form $%
(\ref{propstat})$ even if the $2L$th derivative of $f$ is zero. See also the
discussion at the end of Example \ref{exm:linear-sde-moments} and numerical
tests in Example \ref{exm:mcir}.
\end{rem}

\begin{rem}
We note that it is possible to prove a proposition analogous to Proposition~%
\ref{prop:weak-apprx-euler-sg} for a more general SDE, e.g. for SDE with
additive noise. Since such a proposition does not add further information to
our discussion of the use of SGC and its proof is more complex than in the
case of $(\ref{linearSDE})$, we do not consider such a proposition here.
\end{rem}

\section{Recursive collocation algorithm for linear SPDE}
\label{sec:recursive-sg-advdiff}

In the previous section we have demonstrated the limitations of SGC
algorithms in application to SDE that, in general, such an algorithm will
not work unless integration time $T$ and magnitude of noise are small. It is
not difficult to understand that SGC algorithms have the same limitations in
the case of SPDE as well, which, in particular, is demonstrated in
Example~4.2, where a stochastic Burgers equation is considered. To cure this
deficiency and achieve longer time integration in the case of linear SPDE,
we will exploit the idea of the recursive approach proposed in \cite%
{LotMR97,ZhangRTK12} in the case of a Wiener chaos expansion method. To this
end, we apply the algorithm of SGC accompanied by a time discretization of
SPDE over a small interval $[(k-1)h,kh]$ instead of the whole interval $%
[0,T] $ as we did in the previous section and build a recursive scheme to
compute the second-order moments of the solutions to linear SPDE.

Consider the following linear SPDE in Ito's form: 
\begin{eqnarray}
d{u(t,x)} &=&\left[ \mathcal{L}u(t,x)+f(x)\right] dt+\sum_{l=1}^{r}\left[ 
\mathcal{M}_{l}u(t,x)+g_{l}(x)\right] {d}w_{l}(t),\;(t,x)\in (0,T]\times 
\mathcal{D},\ \ \ \ \ \ \ \   \label{eq:sadv-diff} \\
{u(0,x)} &=&u_{0}(x),\ \ x\in \mathcal{D},  \notag
\end{eqnarray}%
where $\mathcal{D}$ is an open domain in $\mathbb{R}^{m}$ and $(w(t),%
\mathcal{F}_{t})$ is a Wiener process as in \eqref{eq:ito-sde-vector}, and 
\begin{eqnarray}
\mathcal{L}u(t,x) &=&\sum_{i,j=1}^{m}a_{ij}\left( x\right) \frac{\partial
^{2}}{\partial x_{i}\partial x_{j}}u(t,x)+\sum_{i=1}^{m}b_{i}(x)\frac{%
\partial }{\partial x_{i}}u(t,x)+c\left( x\right) u(t,x),
\label{eq:sadv-diff-coefficients} \\
\mathcal{M}_{l}u(t,x) &=&\sum_{i=1}^{m}\alpha _{i}^{l}(x)\frac{\partial }{%
\partial x_{i}}u(t,x)+\beta ^{l}\left( x\right) u(t,x).  \notag
\end{eqnarray}%
We assume that $\mathcal{D}$ is either bounded with regular boundary or that 
$\mathcal{D}=\mathbb{R}^{m}.$ In the former case we consider periodic
boundary conditions and in the latter the Cauchy problem. We also assume
that the coefficients of the operators $\mathcal{L}$ and $\mathcal{M}$ are
uniformly bounded and 
\begin{equation*}
\tilde{\mathcal{L}}:=\mathcal{L}-\frac{1}{2}\sum_{1\leq l\leq r}\,\mathcal{M}%
_{l}\mathcal{M}_{l}
\end{equation*}%
is nonnegative definite. When the coefficients of $\mathcal{L}$ and $%
\mathcal{M}$ are sufficiently smooth, existence and uniqueness results for
the solution of \eqref{eq:sadv-diff}-\eqref{eq:sadv-diff-coefficients} are
available, e.g., in \cite{Roz-B90} and under weaker assumptions see, e.g., 
\cite{MikRoz98,LotRoz06a}.

We will continue to use the notation from the previous section: $h$ is a
step of uniform discretization of the interval $[0,T],$ $N=T/h$ and $%
t_{k}=kh,$ $k=0,\ldots ,N.$ We apply the trapezoidal rule in time to the
SPDE \eqref{eq:sadv-diff}: 
\begin{gather}
u^{k+1}(x)=u^{k}(x)+h[\tilde{\mathcal{L}}u^{k+1/2}(x)-\frac{1}{2}%
\sum_{l=1}^{r}\mathcal{M}_{l}g_{l}(x)+f(x)]  \label{eq:sadv-diff-cn} \\
+\sum_{l=1}^{r}\left[ \mathcal{M}_{l}u^{k+1/2}(x)+g_{l}(x)\right] \sqrt{h}%
\left( \xi _{lh}\right) _{k+1},\;x\in \mathcal{D},  \notag \\
u^{0}(x)=u_{0}(x),  \notag
\end{gather}%
where $u^{k}(x)$ approximates $u(t_{k},x)$, $u^{k+1/2}=(u^{k+1}+u^{k})/2,$
and $\left( \xi _{lh}\right) _{k}$ are i.i.d. random variables so that 
\begin{equation}
\xi _{h}=\left\{ 
\begin{array}{c}
\xi ,\;|\xi |\leq A_{h}, \\ 
A_{h},\;\xi >A_{h}, \\ 
-A_{h},\;\xi <-A_{h},%
\end{array}%
\right.  \label{Fin61}
\end{equation}%
with $\xi $ $\sim $ $\mathcal{N}(0,1)$ and $A_{h}=\sqrt{2p|\ln h|}$ with $%
p\geq 1.$ We note that the cut-off of the Gaussian random variables is
needed in order to ensure that the implicitness of \eqref{eq:sadv-diff-cn}
does not lead to non-existence of the second moment of $u^{k}(x)$ \cite%
{MilRT02,MilTre-B04}. Based on the standard results of numerics for SDE \cite%
{MilTre-B04}, it is natural to expect that under some regularity assumptions
on the coefficients and the initial condition of \eqref{eq:sadv-diff}, the
approximation $u^{k}(x)$ from (\ref{eq:sadv-diff-cn}) converges with order $%
1/2$ in the mean-square sense and with order $1$ in the weak sense and in
the latter case one can use discrete random variables $\zeta _{l,k+1}$ from (%
\ref{eq:weak-euler-discrete-apprx-gauss}) instead of $\left( \xi
_{lh}\right) _{k+1}$ (see also e.g. {\color{black}\cite{Deb11,GreKlo96,KloSho01}} but we are not
proving such a result here). 

In what follows it will be convenient to also use the notation: $%
u_{H}^{k}(x;\phi (\cdot ))=u_{H}^{k}(x;\phi (\cdot );\left( \xi _{lh}\right)
_{k},l=1,\ldots ,r)$ for the approximation \eqref{eq:sadv-diff-cn} of the
solution $u(t_{k},x)$ to the SPDE \eqref{eq:sadv-diff} with $f(x)=0$ and $%
g_{l}(x)=0$ for all $l$ (homogeneous SPDE) and with the initial condition $%
\phi (\cdot )$ prescribed at time $t=t_{k-1};$ $u_{O}^{k}(x)=u_{O}^{k}(x;%
\left( \xi _{lh}\right) _{k},l=1,\ldots ,r)$ for the approximation %
\eqref{eq:sadv-diff-cn} of the solution $u(t_{k},x)$ to the SPDE %
\eqref{eq:sadv-diff} with the initial condition $\phi (x)=0$ prescribed at
time $t=t_{k-1}.$ Note that $u_{O}^{k}(x)=0$ if $f(x)=0$ and $g_{l}(x)=0$
for all $l.$

Let $\left\{ e_{i}\right\} =\left\{ e_{i}(x)\right\} _{i\geq 1}$ be a
complete orthonormal system (CONS) in $L^{2}(\mathcal{D})$ with boundary
conditions satisfied and $(\cdot ,\cdot )$ be the inner product in that
space. Then we can write 
\begin{equation}
u^{k-1}(x)=\sum_{i=1}^{\infty }c_{i}^{k-1}e_{i}(x)  \label{agan1}
\end{equation}%
with $c_{i}^{k-1}=(u^{k-1},e_{i})$ and, due to the SPDE's linearity: 
\begin{equation*}
u^{k}(x)=u_{O}^{k}(x)+\sum_{i=1}^{\infty }c_{i}^{k-1}u_{H}^{k}(x;e_{i}(\cdot
)).
\end{equation*}%
We have 
\begin{equation*}
c_{l}^{0}=(u_{0},e_{l}),\ \ \ c_{l}^{k}=q_{Ol}^{k}+\sum_{i=1}^{\infty
}c_{i}^{k-1}q_{Hli}^{k},\ \ l=1,2,\ldots ,\ \ k=1,\ldots ,N,
\end{equation*}%
where $q_{Ol}^{k}=(u_{O}^{k},e_{l})$ and $q_{Hli}^{k}=(u_{H}^{k}(\cdot
;e_{i}),e_{l}(\cdot )).$

Using (\ref{agan1}), we represent the second moment of the approximation $%
u^{k}(x)$ from \eqref{eq:sadv-diff-cn} of the solution $u(t_{k},x)$ to the
SPDE \eqref{eq:sadv-diff} as follows%
\begin{equation}
\mathbb{E}[u^{k}(x)]^{2}=\sum_{i,j=1}^{\infty }C_{ij}^{k}e_{i}(x)e_{j}(x),
\label{Eu2}
\end{equation}%
where the covariance matrix $C_{ij}^{k}=\mathbb{E}[c_{i}^{k}c_{j}^{k}].$
Introducing also the means $M_{i}^{k},$ one can obtain the recurrent
relations in $k:$ 
\begin{eqnarray}
M_{i}^{0} &=&c_{i}^{0}=(u_{0},e_{i}),\ \ C_{ij}^{0}=c_{i}^{0}c_{j}^{0},
\label{agan2} \\
M_{i}^{k} &=&\mathbb{E}[q_{Oi}^{k}]+\sum_{l=1}^{\infty }M_{l}^{k-1}\mathbb{E}%
[q_{Hil}^{k}],  \notag \\
C_{ij}^{k} &=&\mathbb{E}[q_{Oi}^{k}q_{Oj}^{k}]+\sum_{l=1}^{\infty
}M_{l}^{k-1}\left( \mathbb{E}[q_{Oi}^{k}q_{Hjl}^{k}]+\mathbb{E}%
[q_{Oj}^{k}q_{Hil}^{k}]\right) +\sum_{l,p=1}^{\infty }C_{lp}^{k-1}\mathbb{E}%
[q_{Hil}^{k}q_{Hjp}^{k}],  \notag \\
i,j &=&1,2,\ldots ,\ \ k=1,\ldots ,N.  \notag
\end{eqnarray}%
Since the coefficients of the SPDE \eqref{eq:sadv-diff} are time
independent, all the expectations involving the quantities $q_{Oi}^{k}$ and $%
q_{Hil}^{k}$ in (\ref{agan2}) do not depend on $k$ and hence it is
sufficient to compute them just once, on a single step $k=1,$ and we get 
\begin{eqnarray}
M_{i}^{0} &=&c_{i}^{0}=(u_{0},e_{i}),\ \ C_{ij}^{0}=c_{i}^{0}c_{j}^{0},
\label{recMC} \\
M_{i}^{k} &=&\mathbb{E}[q_{Oi}^{1}]+\sum_{l=1}^{\infty }M_{l}^{k-1}\mathbb{E}%
[q_{Hil}^{1}],  \notag \\
C_{ij}^{k} &=&\mathbb{E}[q_{Oi}^{1}q_{Oj}^{1}]+\sum_{l=1}^{\infty
}M_{l}^{k-1}\left( \mathbb{E}[q_{Oi}^{1}q_{Hjl}^{1}]+\mathbb{E}%
[q_{Oj}^{1}q_{Hil}^{1}]\right) +\sum_{l,p=1}^{\infty }C_{lp}^{k-1}\mathbb{E}%
[q_{Hil}^{1}q_{Hjp}^{1}],  \notag \\
i,j &=&1,2,\ldots ,\ \ k=1,\ldots ,N.  \notag
\end{eqnarray}%
These expectations can be approximated by quadrature rules from Section~2.1.
If the number of noises $r$ is small, then it is natural to use the tensor
product rule (\ref{appi}) with one-dimensional Gauss--Hermite quadratures of
order $n=2$ or $3$ (note that when $r=1,$ we can use just a one-dimensional
Gauss--Hermite quadrature of order $n=2$ or $3).$ If the number of noises $r$
is large then it might be beneficial to use the sparse grid quadrature (\ref%
{eq:smolyak-tensor-like}) of level $L=2$ or $3.$ More specifically, 
\begin{eqnarray}
\mathbb{E}[q_{Oi}^{1}] &\doteq &\sum_{p=1}^{\eta }(u_{O}^{1}(\cdot ;\mathrm{y%
}_{p}),e_{i}(\cdot ))\mathsf{W}_{p},\ \ \mathbb{E}[q_{Hil}^{1}]\doteq
\sum_{p=1}^{\eta }(u_{H}^{1}(\cdot ;e_{l};\mathrm{y}_{p}),e_{i}(\cdot ))%
\mathsf{W}_{p},  \label{app_expec} \\
\mathbb{E}[q_{Oi}^{1}q_{Oj}^{1}] &\doteq &\sum_{p=1}^{\eta }(u_{O}^{1}(\cdot
;\mathrm{y}_{p}),e_{i}(\cdot ))(u_{O}^{1}(\cdot ;\mathrm{y}_{p}),e_{j}(\cdot
))\mathsf{W}_{p},  \notag \\
\mathbb{E}[q_{Oi}^{1}q_{Hjl}^{1}] &\doteq &\sum_{p=1}^{\eta
}(u_{O}^{1}(\cdot ;\mathrm{y}_{p}),e_{i}(\cdot ))(u_{H}^{1}(\cdot ;e_{l};%
\mathrm{y}_{p}),e_{j}(\cdot ))\mathsf{W}_{p},  \notag \\
\mathbb{E}[q_{Hil}^{1}q_{Hjk}^{1}] &\doteq &\sum_{p=1}^{\eta
}(u_{H}^{1}(\cdot ;e_{l};\mathrm{y}_{p}),e_{i}(\cdot ))(u_{H}^{1}(\cdot
;e_{k};\mathrm{y}_{p}),e_{j}(\cdot ))\mathsf{W}_{p},  \notag
\end{eqnarray}%
where $\mathrm{y}_{p}\in \mathbb{R}^{r}$ are nodes of the quadrature, $%
\mathsf{W}_{p}$ are the corresponding quadrature weights, and $\eta =n^{r}$
in the case of the tensor product rule (\ref{appi}) with one-dimensional
Gauss--Hermite quadratures of order $n$ or $\eta $ is the total number of
nodes $\#S$ used by the sparse-grid quadrature (\ref{eq:smolyak-tensor-like}%
) of level $L.$ To find $u_{O}^{1}(x;\mathrm{y}_{p})$ and $u_{H}^{1}(x;e_{l};%
\mathrm{y}_{p}),$ we need to solve the corresponding elliptic PDE problems,
which we do using the spectral method in physical space, i.e., using a
truncation of the CONS $\left\{ e_{l}\right\} _{l=1}^{l_{\ast }}$ to
represent the numerical solution.

To summarize, we formulate the following deterministic recursive algorithm
for the second-order moments of the solution to the SPDE problem (\ref%
{eq:sadv-diff}).

\begin{algo}
\label{algo:sadv-diff-s4-scm-mom} Choose the algorithm's parameters: a
complete orthonormal basis $\{e_{l}(x)\}_{l\geq 1}$ in $L^{2}(\mathcal{D})$
and its truncation $\{e_{l}(x)\}_{l=1}^{l_{\ast }}$; a time step size $h$;
and a quadrature rule (i.e., nodes $\mathrm{y}_{p}\ $and the quadrature
weights $\mathsf{W}_{p},$ $p=1,\ldots ,\eta )$.

Step 1. For each $p=1,\ldots ,\eta $ and $l=1,\ldots ,l_{\ast },$ find
approximations $\bar{u}_{O}^{1}(x;\mathrm{y}_{p})\approx u_{O}^{1}(x;\mathrm{%
y}_{p})$ and $\bar{u}_{H}^{1}(x;e_{l};\mathrm{y}_{p})\approx
u_{H}^{1}(x;e_{l};\mathrm{y}_{p})$ using the spectral method in physical
space.

Step 2. Using the quadrature rule, approximately find the expectations as in 
$(\ref{app_expec})$ but with the approximate $\bar{u}_{O}^{1}(x;\mathrm{y}%
_{p})$ and $\bar{u}_{H}^{1}(x;e_{l};\mathrm{y}_{p})$ instead of $u_{O}^{1}(x;%
\mathrm{y}_{p})$ and $u_{H}^{1}(x;e_{l};\mathrm{y}_{p}),$ respectively.

Step 3. Recursively compute the approximations of the means $M_{i}^{k},$ $%
i=1,\ldots ,l_{\ast },$ and covariance matrices $\{C_{ij}^{k},$ $%
i,j=1,\ldots ,l_{\ast }\}$ for $k=1,\ldots ,N$ according to $(\ref{recMC})$
with the approximate expectations found in Step 2 instead of the exact ones.

Step 4. Compute the approximation of the second-order moment $\mathbb{E}%
\lbrack u^{k}(x)\rbrack^{2}$ using $(\ref{Eu2})$ with the approximate
covariance matrix found in Step~3 instead of the exact one $\{C_{ij}^{k}\}.$
\end{algo}

We emphasize that Algorithm~\ref{algo:sadv-diff-s4-scm-mom} for computing
moments does not have a statistical error. {\color{black}Based on the error estimate  in Proposition \ref{prop:weak-apprx-euler-sg}, we expect the one-step error of SGC  for our recursive algorithm 
  is of order $h^L$. Hence, we expect the total global error from trapezoidal rule in time and SGC      to be $O(h)+O(h^{L-1})$.} Error analysis of this algorithm
will be considered elsewhere. 

\begin{rem}
Algorithms analogous to Algorithm~\ref{algo:sadv-diff-s4-scm-mom} can also
be constructed based on other time-discretizations methods than the
trapezoidal rule used here or based on other types of SPDE approximations,
e.g. one can exploit the Wong-Zakai approximation.
\end{rem}

\begin{rem}
\label{rm:s4-scm-cost} The cost of this algorithm is, similar to the
algorithm in \cite{ZhangRTK12}, $\frac{T}{\Delta }\eta l_{\ast}^{4}$ and the
storage is $\eta l_{\ast}^{2}$. The total cost can be reduced by employing
some reduced order methods in physical space and be proportional to $%
l_{\ast}^{2}$ instead of $l_{\ast}^{4}$. The discussion on computational
efficiency of the recursive Wiener chaos method is also valid here, see \cite%
[Remark 4.1]{ZhangRTK12}.
\end{rem}

{\color{black}
\begin{rem} 
Choosing an orthonormal basis  is an important topic in the research of spectral methods, 
which can be found in \cite{GotOrs-B77} and many subsequent works. Here we choose Fourier basis for  Problem  \ref{eq:sadv-diff} because of   periodic boundary conditions.
\end{rem}
}
\section{Numerical experiments\label{sec:num-experiments-sg}}

In this section we illustrate via three examples how the SGC algorithms can
be used for the weak-sense approximation of SDE and SPDE. The first example
is a scalar SDE with multiplicative noise, where we show that the SGC
algorithm's error is small when the noise magnitude is small. We also
observe that when the noise magnitude is large, the SGC algorithm does not
work well. In the second example we demonstrate that the SGC can be
successfully used for simulating Burgers equation with additive noise when
the integration time is relatively small. In the last example we show that
the recursive algorithm from Section~\ref{sec:recursive-sg-advdiff} works
effectively for computing moments of the solution to an advection-diffusion
equation with multiplicative noise over a longer integration time.

In all the tests we limit the dimension of random spaces by $40$, which is
an empirical limitation of the SGC of Smolyak on the dimensionality \cite%
{Pet03}. Also, we take the sparse grid level less than or equal to five in
order to avoid an excessive number of sparse grid points. All the tests were
run using Matlab R2012b on a Macintosh desktop computer with Intel Xeon CPU
E5462 (quad-core, 2.80 GHz).

\begin{exm}[modified Cox-Ingersoll-Ross (mCIR), see e.g. {\protect\cite%
{ComGR07}}]
\label{exm:mcir} 
\upshape%
Consider the Ito SDE%
\begin{equation}
dX=-\theta _{1}X\,dt+\theta _{2}\sqrt{1+X^{2}}\,dw(t),\quad X(0)=x_{0}.
\label{eq:m-cir}
\end{equation}%
For $\theta _{2}^{2}-2\theta _{1}\neq 0$, the first two moments of $X(t)$
are equal to%
\begin{equation*}
\mathbb{E}X(t)=x_{0}\exp (-\theta _{1}t),\quad \mathbb{E}X^{2}(t)=-\frac{%
\theta _{2}^{2}}{\theta _{2}^{2}-2\theta _{1}}+(x_{0}^{2}+\frac{\theta
_{2}^{2}}{\theta _{2}^{2}-2\theta _{1}})\exp ((\theta _{2}^{2}-2\theta
_{1})t).
\end{equation*}%
In this example we test the SGC algorithms based on the Euler scheme (\ref%
{eq:ito-sde-vector-euler}) and on the second-order weak scheme %
\eqref{eq:ito-sde-vec-weak-2nd-order}. We compute the first two moments of
the SDE's solution and {\color{black}use the relative errors to  measure  accuracy  of} the algorithms as 
\begin{equation}
\rho _{1}^{r}(T)=\frac{\left\vert \mathbb{E}X(T)-\mathbb{E}X_{N}\right\vert 
}{\left\vert \mathbb{E}X(T)\right\vert },\quad \rho _{2}^{r}(T)=\frac{%
\left\vert \mathbb{E}X^{2}(T)-\mathbb{E}X_{N}^{2}\right\vert }{\mathbb{E}%
X^{2}(T)}.  \label{eq:error-measure-sode-firstTwomom}
\end{equation}

Table \ref{tbl:mcir-1} presents the errors for the SGC algorithms based on
the Euler scheme (left) and on the second-order scheme %
\eqref{eq:ito-sde-vec-weak-2nd-order} (right), when the noise magnitude is
small. For the parameters given in the table's description, the exact values
(up to 4 d.p.) of the first and second moments are $3.679\times 10^{-2}$ and 
$4.162\times 10^{-2}$, respectively. We see that increase of the SGC level $%
L $ above $2$ in the Euler scheme case and above $3$ in the case of the
second-order scheme does not improve accuracy. When the SGC error is
relatively small in comparison with the error due to time discretization, we
observe decrease of the overall error of the algorithms in $h$: proportional
to $h$ for the Euler scheme and to $h^{2}$ for the second-order scheme. We
underline that in this experiment the noise magnitude is small.
\end{exm}

\begin{table}[tbph]
\caption{Comparison of the SGC algorithms based on the Euler scheme (left)
and on the second-order scheme \eqref{eq:ito-sde-vec-weak-2nd-order}
(right). The parameters of the model (\protect\ref{eq:m-cir}) are $x_{0}=0.1$%
, $\protect\theta _{1}=1,$ $\protect\theta _{2}=0.3,$ and $T=1$. }
\label{tbl:mcir-1}
\begin{center}
\scalebox{0.75}{
\begin{tabular}{cccccc|ccccc}
\hline
$h$ & $L$ & $\rho _{1}^{r}(1)$ & order & $\rho _{2}^{r}(1)$ & 
\multicolumn{1}{c|}{order} & $L$ & $\rho _{1}^{r}(1)$ & order & $\rho
_{2}^{r}(1)$ & order \\ \hline\hline
\multicolumn{1}{l}{5$\times 10^{-1}$} & 2 & 3.20$\times 10^{-1}$ & -- & 3.72$\times 10^{-1}$ & \multicolumn{1}{c|}{--} & 3 &  $\mathbf{6.05\times 10^{-2}}$ & -- & 
8.52$\times 10^{-2}$ & -- \\ \hline
\multicolumn{1}{l}{2.5$\times 10^{-1}$} & 2 & 1.40$\times 10^{-1}$ & 1.2 &  1.40$\times 10^{-1}$ & \multicolumn{1}{c|}{1.4} & 3 & 1.14$\times 10^{-2}$ & 
2.4 & 2.10$\times 10^{-2}$ & 2.0 \\ \hline
\multicolumn{1}{l}{1.25$\times 10^{-1}$} & 2 & $\mathbf{6.60\times 10^{-2}}$ & 1.1 &  4.87$\times 10^{-2}$ & \multicolumn{1}{c|}{1.5} & 3 & 1.75$\times 10^{-3}$ & 
2.7 & 6.73$\times 10^{-3}$ & 1.6 \\ \hline
\multicolumn{1}{l}{6.25$\times 10^{-2}$} & 2 & 3.21$\times 10^{-2}$ & 1.0 &  8.08$\times 10^{-3}$ & \multicolumn{1}{c|}{2.6} & 4 & 3.64$\times 10^{-4}$ & 
2.3 & 1.21$\times 10^{-3}$ & 2.5 \\ \hline
\multicolumn{1}{l}{3.125$\times 10^{-2}$} & 2 & 1.58$\times 10^{-2}$ & 1.0 &  1.12$\times 10^{-2}$ & \multicolumn{1}{c|}{-0.5} & 4 & 8.48$\times 10^{-4}$
& -1.2 & 3.75$\times 10^{-4}$ & 1.7 \\ \hline\hline
\multicolumn{1}{l}{2.5$\times 10^{-2}$} & 2 & 1.26$\times 10^{-2}$ &  & 1.49$\times 10^{-2}$ & \multicolumn{1}{c|}{} & 2 & 9.02$\times 10^{-4}$ &  & 5.72$\times 10^{-2}$ &  \\ \hline
\multicolumn{1}{l}{2.5$\times 10^{-2}$} & 3 & 1.26$\times 10^{-2}$ &  & 1.48$\times 10^{-2}$ & \multicolumn{1}{c|}{} & 3 & 9.15$\times 10^{-5}$ &  & 2.84$\times 10^{-3}$ &  \\ \hline
\multicolumn{1}{l}{2.5$\times 10^{-2}$} & 4 & 1.26$\times 10^{-2}$ &  & 1.55$\times 10^{-2}$ & \multicolumn{1}{c|}{} & 4 & 1.06$\times 10^{-4}$ &  & 2.77$\times 10^{-4}$ &  \\ \hline
\multicolumn{1}{l}{2.5$\times 10^{-2}$} & 5 & 1.26$\times 10^{-2}$ &  & 1.56$\times 10^{-2}$ & \multicolumn{1}{c|}{} & 5 & 1.06$\times 10^{-4}$ &  & 1.81$\times 10^{-4}$ &  \\ \hline\hline
\end{tabular}}
\end{center}
\end{table}

In Table \ref{tbl:mcir-21} we give results of the numerical experiment when
the noise magnitude is not small. For the parameters given in the table's
description, the exact values (up to 4 d.p.) of the first and second moments
are $0.2718$ and $272.3202$, respectively. Though for the Euler scheme there
is a proportional to $h$ decrease of the error in computing the mean, there
is almost no decrease of the error in the rest of this experiment. The large
value of the second moment apparently affects efficiency of the SGC here.
For the Euler scheme, increasing $L$ and decreasing $h$ can slightly improve
accuracy in computing the second moment, e.g. the smallest relative error
for the second moment is $56.88\%$ when $h=0.03125$ and $L=5$ (this level
requires 750337 sparse grid points) out of the considered cases of $h=0.5,$\ 
$0.25,$\ $0.125,$\ $0.0625,\ $and $0.03125$ and $L\leq 5$. For the mean,
increase of the level $L$ from $2$ to $3,$ $4$ or $5$ does not improve
accuracy. For the second-order scheme \eqref{eq:ito-sde-vec-weak-2nd-order},
relative errors for the mean can be decreased by increasing $L$ for a fixed $%
h$: e.g., for $h=0.25$, the relative errors are $0.5121$ $0.1753$, $0.0316$
and $0.0086$ when $L=2,$\ $3,$\ $4,\ $and $5,$ respectively.

We also see in Table~\ref{tbl:mcir-21} that the SGC algorithm based on the
second-order scheme may not admit higher accuracy than the one based on the
Euler scheme, e.g. for $h=0.5,~0,25,~0.125$ the second-order scheme yields
higher accuracy while the Euler scheme demonstrates higher accuracy for
smaller $h=0.0625$ and $0.03125$. Further decrease in $h$ was not considered
because this would lead to increase of the dimension of the random space
beyond 40 when the sparse grid of Smolyak \eqref{eq:smolyak-tensor-like} may
fail and the SGC algorithm may also lose its competitive edge with Monte
Carlo-type techniques.

\begin{table}[tbph]
\caption{Comparison of the SGC algorithms based on the Euler scheme (left)
and on the second-order scheme \eqref{eq:ito-sde-vec-weak-2nd-order}
(right). The parameters of the model (\protect\ref{eq:m-cir}) are $%
x_{0}=0.08 $, $\protect\theta _{1}=-1,$ $\protect\theta _{2}=2,$ and $T=1$.
The sparse grid level $L=4$.}
\label{tbl:mcir-21}
\begin{center}
\begin{tabular}{cccc|cc}
\hline
$h$ & $\rho _{1}^{r}(1)$ & order & $\rho _{2}^{r}(1)$ & $\rho _{1}^{r}(1)$ & 
$\rho _{2}^{r}(1)$ \\ \hline\hline
\multicolumn{1}{l}{5$\times 10^{-1}$} & 1.72$\times 10^{-1}$ & -- & 9.61$%
\times 10^{-1}$ & 2.86$\times 10^{-2}$ & 7.69$\times 10^{-1}$ \\ \hline
\multicolumn{1}{l}{2.5$\times 10^{-1}$} & 1.02$\times 10^{-1}$ & 0.8 & 8.99$%
\times 10^{-1}$ & 8.62$\times 10^{-3}$ & 6.04$\times 10^{-1}$ \\ \hline
\multicolumn{1}{l}{1.25$\times 10^{-1}$} & 5.61$\times 10^{-2}$ & 0.9 & 7.87$%
\times 10^{-1}$ & 1.83$\times 10^{-2}$ & 7.30$\times 10^{-1}$ \\ \hline
\multicolumn{1}{l}{6.25$\times 10^{-2}$} & 2.96$\times 10^{-2}$ & 0.9 & 6.62$%
\times 10^{-1}$ & 3.26$\times 10^{-2}$ & 8.06$\times 10^{-1}$ \\ \hline
\multicolumn{1}{l}{3.125$\times 10^{-2}$} & 1.52$\times 10^{-2}$ & 1.0 & 5.64%
$\times 10^{-1}$ & 4.20$\times 10^{-2}$ & 8.40$\times 10^{-1}$ \\ \hline\hline
\end{tabular}%
\end{center}
\end{table}

Via this example we have shown that the SGC algorithms based on first- and
second-order schemes can produce sufficiently accurate results when noise
magnitude is small and that the second-order scheme is preferable since for
the same accuracy it uses random spaces of lower dimension than the
first-order Euler scheme, compare e.g. the error values highlighted by bold
font in Table~\ref{tbl:mcir-1} and see also the discussion at the end of
Section~2.2. When the noise magnitude is large (see Table~\ref{tbl:mcir-21}%
), the SGC algorithms do not work well as it was predicted in Section~2.3.

\begin{exm}[Burgers equation with additive noise]
\upshape%
Consider the stochastic Burgers equation \cite{DaPDT94,HouLRZ06}:%
\begin{equation}
du+u\frac{\partial u}{\partial x}dt=\nu \frac{\partial ^{2}u}{\partial x^{2}}%
dt+\sigma \cos (x)d{w},\quad 0\leq x\leq \ell ,\quad \nu >0
\label{eq:burgers-additive2}
\end{equation}%
with the initial condition $u_{0}(x)=2\nu \frac{2\pi }{\ell }\frac{\sin (%
\frac{2\pi }{\ell }x)}{a+\cos (\frac{2\pi }{\ell }x)},$ $a>1$, and periodic
boundary conditions. In the numerical tests the used values of the
parameters are $\ell =2\pi $ and $a=2$.
\end{exm}

Apply the Fourier collocation method in physical space and the trapezoidal
rule in time to (\ref{eq:burgers-additive2}): 
\begin{equation}
\frac{\vec{u}_{j+1}-\vec{u}_{j}}{h}-\nu D^{2}\frac{\vec{u}_{j+1}+\vec{u}_{j}%
}{2}=-\frac{1}{2}D(\frac{\vec{u}_{j+1}+\vec{u}_{j}}{2})^{2}+\sigma \Gamma 
\sqrt{h}\xi _{j},  \label{eq:cn-burgers-additive}
\end{equation}%
where $\vec{u}_{j}=(u(t_{j},x_{1}),\ldots ,u(t_{j},x_{M}))^{\intercal
},\quad t_{j}=jh,$ $D$ is the Fourier spectral differential matrix, $\xi
_{j} $ are i.i.d $\mathcal{N}(0,1)$ random variables, and $\Gamma =(\cos
(x_{1}),\ldots ,\cos (x_{M}))^{\intercal }.$ The Fourier collocation points
are $x_{m}=m\frac{\ell }{M}$ ($m=1,\ldots ,M$) in physical space and in the
experiment we used $M=100$. We aim at computing moments of $\vec{u}_{j},$
which are integrals with respect to the Gaussian measure corresponding to
the collection of $\xi _{j},$ and we approximate these integrals using the
SGC from Section~2. The use of the SGC amounts to substituting $\xi _{j}$ in %
\eqref{eq:cn-burgers-additive} by sparse-grid nodes, which results in a
system of (deterministic) nonlinear equations of the form %
\eqref{eq:cn-burgers-additive}. To solve the nonlinear equations, we used
the fixed-point iteration method with tolerance $h^{2}/100$.

The errors in computing the first and second moments are measured as follows 
\begin{eqnarray}
\rho _{1}^{r,2}(T) &=&\frac{\left\Vert \mathbb{E}u_{\mathrm{ref}}(T,\cdot )-%
\mathbb{E}u_{\mathrm{num}}(T,\cdot )\right\Vert }{\left\Vert \mathbb{E}u_{%
\mathrm{ref}}(T,\cdot )\right\Vert },\quad \rho _{2}^{r,2}(T)=\frac{%
\left\Vert \mathbb{E}u_{\mathrm{ref}}^{2}(T,\cdot )-\mathbb{E}u_{\mathrm{num}%
}^{2}(T,\cdot )\right\Vert }{\left\Vert \mathbb{E}u_{\mathrm{ref}%
}^{2}(T,\cdot )\right\Vert },\qquad  \label{eq:error-measure-l2} \\
\rho _{1}^{r,\infty }(T) &=&\frac{\left\Vert \mathbb{E}u_{\mathrm{ref}%
}(T,\cdot )-\mathbb{E}u_{\mathrm{num}}(T,\cdot )\right\Vert _{\infty }}{%
\left\Vert \mathbb{E}u_{\mathrm{ref}}(T,\cdot )\right\Vert _{\infty }},\ \
\rho _{2}^{r,\infty }(T)=\frac{\left\Vert \mathbb{E}u_{\mathrm{ref}%
}^{2}(T,\cdot )-\mathbb{E}u_{\mathrm{num}}^{2}(T,\cdot )\right\Vert _{\infty
}}{\left\Vert \mathbb{E}u_{\mathrm{ref}}^{2}(T,\cdot )\right\Vert _{\infty }}%
,  \notag
\end{eqnarray}%
where $\left\Vert v(\cdot )\right\Vert =\displaystyle\left( \frac{2\pi }{M}%
\sum_{m=1}^{M}v^{2}(x_{m})\right) ^{1/2}$, $\left\Vert v(\cdot )\right\Vert
_{\infty }=\displaystyle\max_{1\leq m\leq M}\left\vert v(x_{m})\right\vert $%
, $x_{m}$ are the Fourier collocation points, and $u_{\mathrm{num}}$ and $%
~u_{\mathrm{ref}}$ are the numerical solution obtained by the SGC algorithm
and the reference solution, respectively. The first and second moments of
the reference solution $u_{\mathrm{ref}}$ were computed by the same solver
in space and time \eqref{eq:cn-burgers-additive} but accompanied by the
Monte Carlo method with a large number of realizations ensuring that the
statistical errors were negligible.

First, we choose $\nu =0.1$ and $\sigma =1$. We obtain the reference
solution with $h=10^{-4}$ and $1.92\times 10^{6}$ Monte Carlo realizations.
The corresponding statistical error is $1.004\times 10^{-3}$ for the mean
(maximum of the statistical error for $\mathbb{E}u_{\mathrm{ref}}(0.5,x_{j})$%
) and $9.49\times 10^{-4}$ for the second moment (maximum of the statistical
error for $\mathbb{E}u_{\mathrm{ref}}^{2}(0.5,x_{j})$) with $95\%$
confidence interval, and the corresponding estimates of $L^{2}$-norm of the
moments are $\left\Vert \mathbb{E}u_{\mathrm{ref}}(0.5,\cdot )\right\Vert
\doteq 0.18653$ and $\left\Vert \mathbb{E}u_{\mathrm{ref}}^{2}(0.5,\cdot
)\right\Vert \doteq 0.72817$. We see from the results of the experiment
presented in Table~\ref{tbl:sc-sburgers-CN-large-para} that for $L=2$ the
error in computing the mean decreases when $h$ decreases up to $h=0.05$ but
the accuracy does not improve with further decrease of $h$. For the second
moment, we observe no improvement in accuracy with decrease of $h$. For $%
L=4, $ the error in computing the second moment decreases with $h$. When $%
h=0.0125 $, increasing the sparse grid level improves the accuracy for the
mean: $L=3$ yields $\rho _{1}^{r,2}(0.5)\doteq 9.45\times 10^{-3}$ and $L=4$
yields $\rho _{1}^{r,2}(0.5)\doteq 8.34\times 10^{-3}$. As seen in Table~\ref%
{tbl:sc-sburgers-CN-large-para}, increase of the level $L$ also improves
accuracy for the second moment when $h=0.05,$ $0.25,$ and $0.125$.

\begin{table}[tbph]
\caption{Errors of the SGC algorithm to the stochastic Burgers equation 
\eqref{eq:burgers-additive2} with parameters $T=0.5$, $\protect\nu =0.1$ and 
$\protect\sigma =1$. }
\label{tbl:sc-sburgers-CN-large-para}
\begin{center}
\scalebox{0.79}{
\begin{tabular}{ccc|ccc}
\hline
$h$ & $\rho _{1}^{r,2}(0.5),\ L=2$ & $\rho _{1}^{r,2}(0.5),$ $L=3$ & $\rho
_{2}^{r,2}(0.5),\ L=2$ & $\rho _{2}^{r,2}(0.5),$ $L=3$ & $\rho
_{2}^{r,2}(0.5),$ $L=4$ \\ \hline\hline
\multicolumn{1}{l}{2.5$\times 10^{-1}$} & 1.28$\times 10^{-1}$ & 1.3661$  \times 10^{-1}$ & 4.01$\times 10^{-2}$ & 1.05$\times 10^{-2}$ & 1.25$\times
10^{-2}$ \\ \hline
\multicolumn{1}{l}{1.00$\times 10^{-1}$} & 4.70$\times 10^{-2}$ & 5.3874$  \times 10^{-2}$ & 4.48$\times 10^{-2}$ & 4.82$\times 10^{-3}$ & 4.69$\times
10^{-3}$ \\ \hline
\multicolumn{1}{l}{5.00$\times 10^{-2}$} & 2.75$\times 10^{-2}$ & 2.7273$  \times 10^{-2}$ & 4.73$\times 10^{-2}$ & 5.89$\times 10^{-3}$ & 2.82$\times
10^{-3}$ \\ \hline
\multicolumn{1}{l}{2.50$\times 10^{-2}$} & 2.51$\times 10^{-2}$ & 1.4751$  \times 10^{-2}$ & 4.87$\times 10^{-2}$ & 6.92$\times 10^{-3}$ & 2.34$\times
10^{-3}$ \\ \hline
\multicolumn{1}{l}{1.25$\times 10^{-2}$} & 2.67$\times 10^{-2}$ & 9.4528$  \times 10^{-3}$ & 4.95$\times 10^{-2}$ & 7.51$\times 10^{-3}$ & 2.29$\times
10^{-3}$ \\ \hline\hline
\end{tabular}
}
\end{center}
\end{table}

Second, we choose $\nu =1$ and $\sigma =0.5$. We obtain the first two
moments of the reference $u_{\mathrm{ref}}$ using $h=10^{-4}$ and the Monte
Carlo method with $3.84\times 10^{6}$ realizations. The corresponding
statistical error is $3.2578\times 10^{-4}$ for the mean and $2.2871\times
10^{-4}$ for the second moment with $95\%$ confidence interval, and the
corresponding estimates of $L^{2}$-norm of the moments are $\left\Vert 
\mathbb{E}u_{\mathrm{ref}}(0.5,\cdot )\right\Vert \doteq 1.11198$ and $%
\left\Vert \mathbb{E}u_{\mathrm{ref}}^{2}(0.5,\cdot )\right\Vert \doteq
0.66199$.

The results of the experiment are presented in Table~\ref{tbl:sc-sburgers-CN}%
. We see that accuracy is sufficiently high and there is some decrease of
errors with decrease of time step $h.$ However, as expected, no convergence
in $h$ is observed and further numerical tests (not presented here) showed
that taking $h$ smaller than $1.25\times 10^{-2}$ and level $L=2$ or $3$
does not improve accuracy. In additional experiments we also noticed that
there was no improvement of accuracy for the mean when we increased the
level $L$ up to $5$. For the second moment, we observe some improvement in
accuracy when $L$ increases from 2 to 3 (see Table~\ref{tbl:sc-sburgers-CN})
but additional experiments (not presented here) showed that further increase
of $L$ (up to $5)$ does not reduce the errors.

\begin{table}[tbph]
\caption{Errors of the SGC algorithm applied to the stochastic Burgers
equation \eqref{eq:burgers-additive2} with parameters $\protect\nu =1$, $%
\protect\sigma =0.5,$ and $T=0.5$.}
\label{tbl:sc-sburgers-CN}\centering%
\begin{tabular}{ccc|c}
\hline
$h$ & $\rho _{1}^{r,2}(0.5),\ L=2$ & $\rho _{2}^{r,2}(0.5),\ L=2$ & $\rho
_{2}^{r,2}(0.5),\ L=3$ \\ \hline\hline
\multicolumn{1}{l}{2.5$\times 10^{-1}$} & 4.94$\times 10^{-3}$ & 8.75$\times
10^{-3}$ & 8.48$\times 10^{-3}$ \\ \hline
\multicolumn{1}{l}{1$\times 10^{-1}$} & 8.20$\times 10^{-4}$ & 1.65$\times
10^{-3}$ & 1.13$\times 10^{-3}$ \\ \hline
\multicolumn{1}{l}{5$\times 10^{-2}$} & 4.88$\times 10^{-4}$ & 1.18$\times
10^{-3}$ & 6.47$\times 10^{-4}$ \\ \hline
\multicolumn{1}{l}{2.5$\times 10^{-2}$} & 3.83$\times 10^{-4}$ & 1.08$\times
10^{-3}$ & 5.01$\times 10^{-4}$ \\ \hline
\multicolumn{1}{l}{1.25$\times 10^{-2}$} & 3.45$\times 10^{-4}$ & 1.07$%
\times 10^{-3}$ & 4.26$\times 10^{-4}$ \\ \hline\hline
\end{tabular}%
\end{table}

For the errors measured in $L^{\infty }$-norm \eqref{eq:error-measure-l2} we
had similar observations (not presented here) as in the case of $L^{2}$-norm.

In summary, this example has illustrated that SGC algorithms can produce
accurate results in finding moments of solutions of nonlinear SPDE when the
integration time is relatively small. Comparing Tables \ref%
{tbl:sc-sburgers-CN-large-para} and \ref{tbl:sc-sburgers-CN}, we observe
better accuracy for the first two moments when the magnitude of noise is
smaller. In some situations higher sparse grid levels $L$ improve accuracy
but dependence of errors on $L$ is not monotone. No convergence in time step 
$h$ and in level $L$ was observed which is consistent with our theoretical
prediction in Section~2.

\begin{exm}[Stochastic advection-diffusion equation]
\upshape%
Consider the stochastic advection-diffusion equation in the Ito sense: 
\begin{gather}
du=\left( \frac{\epsilon ^{2}+\sigma ^{2}}{2}\frac{\partial ^{2}u}{\partial
x^{2}}+\beta \sin (x)\frac{\partial u}{\partial x}\right) dt+\sigma \frac{%
\partial u}{\partial x}\,d{w}(s),\quad (t,x)\in (0,T]\times (0,2\pi ),
\label{eq:perturbed--sadv-diff} \\
u(0,x)=\phi (x),\quad x\in (0,2\pi ),  \notag
\end{gather}%
where $w(s)$ is a standard scalar Wiener process and $\epsilon \geq 0$, $%
\beta $, and $\sigma $ are constants. In the tests we took $\phi (x)=\cos
(x) $, $\beta =0.1$, $\sigma =0.5,$ and $\epsilon =0.2$.
\end{exm}

We apply Algorithm \ref{algo:sadv-diff-s4-scm-mom} to %
\eqref{eq:perturbed--sadv-diff} to compute the first two moments at a
relatively large time $T=5$. The Fourier basis was taken as CONS. Since %
\eqref{eq:perturbed--sadv-diff} has a single noise only, we used
one-dimensional Gauss--Hermite quadratures of order $n.$ The implicitness
due to the use of the trapezoidal rule was resolved by the fixed-point
iteration with stopping criterion $h^{2}/100$.

As we have no exact solution of \eqref{eq:perturbed--sadv-diff}, we chose to
find the reference solution by Algorithm~4.2 from \cite{ZhangRTK12} (a
recursive Wiener chaos method accompanied by the trapezoidal rule in time
and Fourier collocation method in physical space) with the parameters: the
number of Fourier collocation points $M=30$, the length of time subintervals
for the recursion procedure $h=10^{-4}$, the highest order of Hermite
polynomials $P=4$, the number of modes approximating the Wiener process $n=4$%
, and the time step in the trapezoidal rule $h=10^{-5}$. It gives the second
moment in the $L^{2}$-norm $\left\Vert \mathbb{E}u_{\mathrm{ref}%
}^{2}(1,\cdot )\right\Vert \doteq 1.065195$. The errors are computed as
follows 
\begin{equation}
\varrho _{2}^{2}(T)=\left\vert \left\Vert \mathbb{E}u_{\mathrm{ref}%
}^{2}(T,\cdot )\right\Vert -\left\Vert \mathbb{E}u_{\mathrm{numer}%
}^{2}(T,\cdot )\right\Vert \right\vert ,\quad \varrho _{2}^{r,2}(T)=\frac{%
\varrho _{2}^{2}(T)}{\left\Vert \mathbb{E}u_{\mathrm{ref}}^{2}(T,\cdot
)\right\Vert },  \label{eq:error-measure-l2-v1}
\end{equation}%
where the norm is defined as in \eqref{eq:error-measure-l2}.

\begin{table}[tbph]
\caption{Errors in computing the second moment of the solution to the
stochastic advection-diffusion equation \eqref{eq:perturbed--sadv-diff} with 
$\protect\sigma =0.5$, $\protect\beta =0.1$, $\protect\epsilon =0.2$ at $T=5$
by Algorithm~\protect\ref{algo:sadv-diff-s4-scm-mom} with $l_{\ast }=20$ and
the one-dimensional Gauss--Hermite quadrature of order $n=2$ (left) and $n=3$
(right).}
\label{tbl:recursive-scm-advdiff-onenoise-delta-1st-order}\centering
\scalebox{0.90}{
\begin{tabular}{cccc|ccc}
\hline
$h$ & $\varrho _{2}^{r,2}(5)$ & order & CPU time (sec.) & $\varrho
_{2}^{r,2}(5)$ & order & CPU time (sec.) \\ \hline\hline
5$\times 10^{-2}$ & 1.01$\times 10^{-3}$ & -- & 7.41 & 1.06$\times 10^{-3}$
& -- & 1.10$\times 10$ \\ \hline
2$\times 10^{-2}$ & 4.07$\times 10^{-4}$ & 1.0 & 1.65$\times 10$ & 4.25$\times 10^{-4}$ & 1.0 & 2.43$\times 10$ \\ \hline
1$\times 10^{-2}$ & 2.04$\times 10^{-4}$ & 1.0 & 3.43$\times 10$ & 2.12$\times 10^{-4}$ & 1.0 & 5.10$\times 10$ \\ \hline
5$\times 10^{-3}$ & 1.02$\times 10^{-4}$ & 1.0 & 6.81$\times 10$ & 1.06$\times 10^{-4}$ & 1.0 & 1.00$\times 10^{2}$ \\ \hline
2$\times 10^{-3}$ & 4.08$\times 10^{-5}$ & 1.0 & 1.70$\times 10^{2}$ & 4.25$\times 10^{-5}$ & 1.0 & 2.56$\times 10^{2}$ \\ \hline
1$\times 10^{-3}$ & 2.04$\times 10^{-5}$ & 1.0 & 3.37$\times 10^{2}$ & 2.12$\times 10^{-5}$ & 1.0 & 5.12$\times 10^{2}$ \\ \hline\hline
\end{tabular}
}
\end{table}

The results of the numerical experiment are given in Table~\ref%
{tbl:recursive-scm-advdiff-onenoise-delta-1st-order}. We observe first-order
convergence in $h$ for the second moments. We notice that increasing the
quadrature order $n$ from $2$ to $3$ does not improve accuracy which is
expected. Indeed, the used trapezoidal rule is of weak order one in $h$ in
the case of multiplicative noise and more accurate quadrature rule cannot
improve the order of convergence. 
{\color{black}This observation confirms in some sense that the total error should be expected to be 
$O(h)$+$O(h^{L-1})$, as discussed in Section \ref{sec:recursive-sg-advdiff}.}
We note in passing that in the additive
noise case we expect to see the second order convergence in $h$ when $n=3$
due to the properties of the trapezoidal rule.

In conclusion, we showed that recursive Algorithm~\ref%
{algo:sadv-diff-s4-scm-mom} can work effectively for accurate computing of
second moments of solutions to linear stochastic advection-diffusion
equations at relatively large time. We observed convergence of order one in $%
h$.

\section*{Acknowledgments}

MVT was partially supported by the Leverhulme Trust Fellowship SAF-2012-006
and is also grateful to ICERM (Brown University, Providence) for its
hospitality. The rest of the authors were partially supported by a OSD/MURI
grant FA9550-09-1-0613, by NSF/DMS grant DMS-1216437 and also by the
Collaboratory on Mathematics for Mesoscopic Modeling of Materials (CM4)
which is sponsored by DOE. BR was also   partially supported by ARO grant W911NF-13-1-0012 
and NSF/DMS grant DMS-1148284.

 
  \def\polhk#1{\setbox0=\hbox{#1}{\ooalign{\hidewidth
  \lower1.5ex\hbox{`}\hidewidth\crcr\unhbox0}}} \def\cprime{$'$}



\begin{thebibliography}{10}

\bibitem{BabNT07}
{\sc I.~Babuska, F.~Nobile, and R.~Tempone}, {\em A stochastic collocation
  method for elliptic partial differential equations with random input data},
  {SIAM} J. Numer. Anal., 45 (2007), pp.~1005--1034 (electronic).

\bibitem{BabTZ04}
{\sc I.~Babuska, R.~Tempone, and G.~E. Zouraris}, {\em {G}alerkin finite
  element approximations of stochastic elliptic partial differential
  equations}, {SIAM} J. Numer. Anal., 42 (2004), pp.~800--825.

\bibitem{BieSch09}
{\sc M.~Bieri and C.~Schwab}, {\em Sparse high order {FEM} for elliptic
  s{PDE}s}, Comput. Methods Appl. Mech. Engrg., 198 (2009), pp.~1149--1170.

\bibitem{BudKal96}
{\sc A.~Budhiraja and G.~Kallianpur}, {\em Approximations to the solution of
  the {Z}akai equation using multiple {W}iener and {S}tratonovich integral
  expansions}, Stochastics Stochastics Rep., 56 (1996), pp.~271--315.

\bibitem{BudKal97}
\leavevmode\vrule height 2pt depth -1.6pt width 23pt, {\em The
  {F}eynman-{S}tratonovich semigroup and {S}tratonovich integral expansions in
  nonlinear filtering}, Appl. Math. Optim., 35 (1997), pp.~91--116.

\bibitem{CasLit11}
{\sc T.~Cass and C.~Litterer}, {\em On the error estimate for cubature on
  {W}iener space}, ArXiv e-prints,  (2011).

\bibitem{ComGR07}
{\sc F.~Comte, V.~Genon-Catalot, and Y.~Rozenholc}, {\em Penalized
  nonparametric mean square estimation of the coefficients of diffusion
  processes}, Bernoulli, 13 (2007), pp.~514--543.

\bibitem{DaPDT94}
{\sc G.~Da~Prato, A.~Debussche, and R.~Temam}, {\em Stochastic {B}urgers'
  equation}, NoDEA Nonlinear Differential Equations Appl., 1 (1994),
  pp.~389--402.

\bibitem{DavRab-B84}
{\sc P.~J. Davis and P.~Rabinowitz}, {\em Methods of numerical integration},
  Computer Science and Applied Mathematics, Academic Press Inc., Orlando, FL,
  second~ed., 1984.

\bibitem{Deb11}
{\sc A.~Debussche}, {\em Weak approximation of stochastic partial differential
  equations: the nonlinear case}, Math. Comp., 80 (2011), pp.~89--117.

\bibitem{Ger-Phd07}
{\sc T.~Gerstner}, {\em Sparse Grid Quadrature Methods for Computational
  Finance}, PhD thesis, University of Bonn, Habilitation, 2007.

\bibitem{GerGri98}
{\sc T.~Gerstner and M.~Griebel}, {\em Numerical integration using sparse
  grids}, Numer. Algorithms, 18 (1998), pp.~209--232.

\bibitem{GhaSpa-B91}
{\sc R.~G. Ghanem and P.~D. Spanos}, {\em Stochastic finite elements: a
  spectral approach}, Springer-Verlag, New York, 1991.

\bibitem{Gil08}
{\sc M.~B. Giles}, {\em Multilevel {M}onte {C}arlo path simulation}, Oper.
  Res., 56 (2008), pp.~607--617.

\bibitem{Gil13}
\leavevmode\vrule height 2pt depth -1.6pt width 23pt, {\em Multilevel {M}onte
  {C}arlo methods}, in Monte {C}arlo and quasi-{M}onte {C}arlo methods 2012,
  Springer, 2013.

\bibitem{GotOrs-B77}
{\sc D.~Gottlieb and S.~A. Orszag}, {\em Numerical analysis of spectral
  methods: theory and applications}, SIAM, Philadelphia, Pa., 1977.

\bibitem{GreKlo96}
{\sc W.~Grecksch and P.~E. Kloeden}, {\em Time-discretised {G}alerkin
  approximations of parabolic stochastic {PDE}s}, Bull. Austral. Math. Soc., 54
  (1996), pp.~79--85.

\bibitem{GriHol10}
{\sc M.~Griebel and M.~Holtz}, {\em Dimension-wise integration of
  high-dimensional functions with applications to finance}, J. Complexity, 26
  (2010), pp.~455--489.

\bibitem{HouLRZ06}
{\sc T.~Y. Hou, W.~Luo, B.~Rozovskii, and H.-M. Zhou}, {\em {W}iener chaos
  expansions and numerical solutions of randomly forced equations of fluid
  mechanics}, J. Comput. Phys., 216 (2006), pp.~687--706.

\bibitem{KloSho01}
{\sc P.~E. Kloeden and S.~Shott}, {\em Linear-implicit strong schemes for
  {I}t\^o-{G}alerkin approximations of stochastic {PDE}s}, J. Appl. Math.
  Stochastic Anal., 14 (2001), pp.~47--53.

\bibitem{KuoSS12a}
{\sc F.~Y. Kuo, C.~Schwab, and I.~H. Sloan}, {\em Multi-level quasi-{M}onte
  carlo finite element methods for a class of elliptic partial differential
  equations with random coefficients}, ArXiv e-prints,  (2012).

\bibitem{LitLyo12}
{\sc C.~Litterer and T.~Lyons}, {\em High order recombination and an
  application to cubature on {W}iener space}, Ann. Appl. Probab., 22 (2012),
  pp.~1301--1327.

\bibitem{LotMR97}
{\sc S.~Lototsky, R.~Mikulevicius, and B.~L. Rozovskii}, {\em Nonlinear
  filtering revisited: a spectral approach}, SIAM J. Control Optim., 35 (1997),
  pp.~435--461.

\bibitem{LotRoz06}
{\sc S.~Lototsky and B.~Rozovskii}, {\em Stochastic differential equations: a
  {W}iener chaos approach}, in From stochastic calculus to mathematical
  finance, Springer, Berlin, 2006, pp.~433--506.

\bibitem{LotRoz06a}
{\sc S.~V. Lototsky and B.~L. Rozovskii}, {\em Wiener chaos solutions of linear
  stochastic evolution equations}, Ann. Probab., 34 (2006), pp.~638--662.

\bibitem{LyoVic04}
{\sc T.~Lyons and N.~Victoir}, {\em Cubature on {W}iener space}, Proc. R. Soc.
  Lond. Ser. A Math. Phys. Eng. Sci., 460 (2004), pp.~169--198.

\bibitem{MasMon94}
{\sc G.~Mastroianni and G.~Monegato}, {\em Error estimates for
  {G}auss-{L}aguerre and {G}auss-{H}ermite quadrature formulas}, in
  Approximation and computation, Birkh\"auser Boston, 1994, pp.~421--434.

\bibitem{MikRoz98}
{\sc R.~Mikulevicius and B.~Rozovskii}, {\em Linear parabolic stochastic {PDE}s
  and {W}iener chaos}, SIAM J. Math. Anal., 29 (1998), pp.~452--480
  (electronic).

\bibitem{MilRT02}
{\sc G.~N. Milstein, Y.~M. Repin, and M.~V. Tretyakov}, {\em Numerical methods
  for stochastic systems preserving symplectic structure}, SIAM J. Numer.
  Anal., 40 (2002), pp.~1583--1604.

\bibitem{MilTre97}
{\sc G.~N. Milstein and M.~V. Tretyakov}, {\em Numerical methods in the weak
  sense for stochastic differential equations with small noise}, SIAM J. Numer.
  Anal., 34 (1997), pp.~2142--2167.

\bibitem{MilTre-B04}
\leavevmode\vrule height 2pt depth -1.6pt width 23pt, {\em Stochastic numerics
  for mathematical physics}, Springer-Verlag, Berlin, 2004.

\bibitem{MilTre09}
\leavevmode\vrule height 2pt depth -1.6pt width 23pt, {\em Solving parabolic
  stochastic partial differential equations via averaging over
  characteristics}, Math. Comp., 78 (2009), pp.~2075--2106.

\bibitem{MotNT12}
{\sc M.~Motamed, F.~Nobile, and R.~Tempone}, {\em A stochastic collocation
  method for the second order wave equation with a discontinuous random speed},
  Numer. Math.,  (2012), pp.~1--44.

\bibitem{MulRY12}
{\sc T.~Muller-Gronbach, K.~Ritter, and L.~Yaroslavtseva}, {\em Derandomization
  of the {E}uler scheme for scalar stochastic differential equations}, J.
  Complexity, 28 (2012), pp.~139--153.

\bibitem{Nie-B92}
{\sc H.~Niederreiter}, {\em Random number generation and quasi-{M}onte {C}arlo
  methods}, SIAM, Philadelphia, PA, 1992.

\bibitem{NobTem09}
{\sc F.~Nobile and R.~Tempone}, {\em Analysis and implementation issues for the
  numerical approximation of parabolic equations with random coefficients},
  Internat. J. Numer. Methods Engrg., 80 (2009), pp.~979--1006.

\bibitem{NobTW08}
{\sc F.~Nobile, R.~Tempone, and C.~G. Webster}, {\em An anisotropic sparse grid
  stochastic collocation method for partial differential equations with random
  input data}, SIAM J. Numer. Anal., 46 (2008), pp.~2411--2442.

\bibitem{NovRit99}
{\sc E.~Novak and K.~Ritter}, {\em Simple cubature formulas with high
  polynomial exactness}, Constr. Approx., 15 (1999), pp.~499--522.

\bibitem{PagPha05}
{\sc G.~Pag{\`e}s and H.~Pham}, {\em Optimal quantization methods for nonlinear
  filtering with discrete-time observations}, Bernoulli, 11 (2005),
  pp.~893--932.

\bibitem{PagPri03}
{\sc G.~Pag{\`e}s and J.~Printems}, {\em Optimal quadratic quantization for
  numerics: the {G}aussian case}, Monte Carlo Methods Appl., 9 (2003),
  pp.~135--165.

\bibitem{Pet03}
{\sc K.~Petras}, {\em {SmolPack}: a software for smolyak quadrature with
  clenshaw-curtis basis-sequence}.
\newblock
  \url{http://people.sc.fsu.edu/~jburkardt/c_src/smolpack/smolpack.html}, 2003.

\bibitem{Roz-B90}
{\sc B.~L. Rozovski{\u\i}}, {\em Stochastic evolution systems}, Kluwer, 1990.

\bibitem{SloJoe-B94}
{\sc I.~H. Sloan and S.~Joe}, {\em Lattice methods for multiple integration},
  Oxford University Press, New York, 1994.

\bibitem{Smolyak63}
{\sc S.~A. Smolyak}, {\em Quadrature and interpolation formulas for tensor
  products of certain classes of functions}, Soviet Math. Dokl., 4 (1963),
  pp.~240--243.

\bibitem{TatMcR94}
{\sc M.~Tatang and G.~McRae}, {\em Direct treatment of uncertainty in models of
  reaction and transport}, tech. rep., Department of Chemical Engineering, MIT,
  1994.

\bibitem{WasWoz95}
{\sc G.~W. Wasilkowski and H.~Wo{\'z}niakowski}, {\em Explicit cost bounds of
  algorithms for multivariate tensor product problems}, J. Complexity, 11
  (1995), pp.~1--56.

\bibitem{XiuHes05}
{\sc D.~Xiu and J.~Hesthaven}, {\em High-order collocation methods for
  differential equations with random inputs}, {SIAM} J. Sci. Comput., 27
  (2005), pp.~1118--1139.

\bibitem{XiuKar02a}
{\sc D.~Xiu and G.~Karniadakis}, {\em Modeling uncertainty in steady state
  diffusion problems via generalized polynomial chaos}, Comput. Methods Appl.
  Mech. Engrg., 191 (2002), pp.~4927--4948.

\bibitem{ZhangGun12}
{\sc G.~Zhang and M.~Gunzburger}, {\em Error analysis of a stochastic
  collocation method for parabolic partial differential equations with random
  input data}, {SIAM} J. Numer. Anal., 50 (2012), pp.~1922--1940.

\bibitem{ZhangRTK12}
{\sc Z.~Zhang, B.~Rozovskii, M.~V. Tretyakov, and G.~E. Karniadakis}, {\em A
  multistage {W}iener chaos expansion method for stochastic
  advection-diffusion-reaction equations}, SIAM J. Sci. Comput., 34 (2012),
  pp.~A914--A936.

\end{thebibliography}
\end{document}